\newtheorem{thm}{Theorem}[section]
\newtheorem{lem}[thm]{Lemma}
\newtheorem{rem}[thm]{Remark}
\theoremstyle{definition}
\newcommand{\scr}[1]{\mathscr #1}
\definecolor{wco}{rgb}{0.5,0.2,0.3}
\numberwithin{equation}{section} \theoremstyle{remark}
\newcommand{\ua}{\uparrow}
\title{{\bf
  	Entropy-cost type Propagation of Chaos for Mean Field Particle Systems with Bounded Measurable Interaction}\footnote{Supported in
 part by  National Key R\&D Program of China (No. 2022YFA1006000) and NNSFC (12271398).} }
\author{
{\bf   Xing Huang  }\\
\footnotesize{ Center for Applied Mathematics, Tianjin
University, Tianjin 300072, China}\\
\footnotesize{  xinghuang@tju.edu.cn}}
\begin{document}
\allowdisplaybreaks
\def\R{\mathbb R}  \def\ff{\frac} \def\ss{\sqrt} \def\B{\mathbf
B} \def\W{\mathbb W}
\def\N{\mathbb N} \def\kk{\kappa} \def\m{{\bf m}}
\def\ee{\varepsilon}\def\ddd{D^*}
\def\dd{\delta} \def\DD{\Delta} \def\vv{\varepsilon} \def\rr{\rho}
\def\<{\langle} \def\>{\rangle} \def\GG{\Gamma} \def\gg{\gamma}
  \def\nn{\nabla} \def\pp{\partial} \def\E{\mathbb E}
\def\d{\text{\rm{d}}} \def\bb{\beta} \def\aa{\alpha} \def\D{\scr D}
  \def\si{\sigma} \def\ess{\text{\rm{ess}}}
\def\beg{\begin} \def\beq{\begin{equation}}  \def\F{\scr F}
\def\Ric{\text{\rm{Ric}}} \def\Hess{\text{\rm{Hess}}}
\def\e{\text{\rm{e}}} \def\ua{\underline a} \def\OO{\Omega}  \def\oo{\omega}
 \def\tt{\tilde} \def\Ric{\text{\rm{Ric}}}
\def\cut{\text{\rm{cut}}} \def\P{\mathbb P} \def\ifn{I_n(f^{\bigotimes n})}
\def\C{\scr C}      \def\aaa{\mathbf{r}}     \def\r{r}
\def\gap{\text{\rm{gap}}} \def\prr{\pi_{{\bf m},\varrho}}  \def\r{\mathbf r}
\def\Z{\mathbb Z} \def\vrr{\varrho}
\def\L{\scr L}\def\Tt{\tt} \def\TT{\tt}\def\II{\mathbb I}
\def\i{{\rm in}}\def\Sect{{\rm Sect}}  \def\H{\mathbb H}
\def\M{\scr M}\def\Q{\mathbb Q} \def\texto{\text{o}}
\def\Rank{{\rm Rank}} \def\B{\scr B} \def\i{{\rm i}} \def\HR{\hat{\R}^d}
\def\to{\rightarrow}\def\l{\ell}\def\iint{\int}
\def\EE{\scr E}\def\Cut{{\rm Cut}}
\def\A{\scr A} \def\Lip{{\rm Lip}}
\def\BB{\scr B}\def\Ent{{\rm Ent}}\def\L{\scr L}
\def\R{\mathbb R}  \def\ff{\frac} \def\ss{\sqrt} \def\B{\mathbf
B}
\def\N{\mathbb N} \def\kk{\kappa} \def\m{{\bf m}}
\def\dd{\delta} \def\DD{\Delta} \def\vv{\varepsilon} \def\rr{\rho}
\def\<{\langle} \def\>{\rangle} \def\GG{\Gamma} \def\gg{\gamma}
  \def\nn{\nabla} \def\pp{\partial} \def\E{\mathbb E}
\def\d{\text{\rm{d}}} \def\bb{\beta} \def\aa{\alpha} \def\D{\scr D}
  \def\si{\sigma} \def\ess{\text{\rm{ess}}}
\def\beg{\begin} \def\beq{\begin{equation}}  \def\F{\scr F}
\def\Ric{\text{\rm{Ric}}} \def\Hess{\text{\rm{Hess}}}
\def\e{\text{\rm{e}}} \def\ua{\underline a} \def\OO{\Omega}  \def\oo{\omega}
 \def\tt{\tilde} \def\Ric{\text{\rm{Ric}}}
\def\cut{\text{\rm{cut}}} \def\P{\mathbb P} \def\ifn{I_n(f^{\bigotimes n})}
\def\C{\scr C}      \def\aaa{\mathbf{r}}     \def\r{r}
\def\gap{\text{\rm{gap}}} \def\prr{\pi_{{\bf m},\varrho}}  \def\r{\mathbf r}
\def\Z{\mathbb Z} \def\vrr{\varrho}
\def\L{\scr L}\def\Tt{\tt} \def\TT{\tt}\def\II{\mathbb I}
\def\i{{\rm in}}\def\Sect{{\rm Sect}}  \def\H{\mathbb H}
\def\M{\scr M}\def\Q{\mathbb Q} \def\texto{\text{o}} \def\LL{\Lambda}
\def\Rank{{\rm Rank}} \def\B{\scr B} \def\i{{\rm i}} \def\HR{\hat{\R}^d}
\def\to{\rightarrow}\def\l{\ell}
\def\8{\infty}\def\I{1}\def\U{\scr U} \def\n{{\mathbf n}}
\maketitle

\begin{abstract} In this paper, the quantitative entropy-cost type propagation of chaos  for mean field interacting particle system is obtained, where the interaction is assumed to be bounded. More precisely, the relative entropy between the distributions of particle system and the corresponding McKean-Vlasov SDEs at any positive time $t$ depends on $L^1$-transportation cost $\W_1^\Psi$ between the initial distributions. The results weaken the initial assumptions in existing entropy-entropy type propagation of chaos.
 \end{abstract}

\noindent
 AMS subject Classification:\  60H10, 60G44.   \\
\noindent
 Keywords: Mean field interacting particle system, McKean-Vlasov SDEs, quantitative propagation of chaos, relative entropy, bounded interaction.
 \vskip 2cm

\section{Introduction}
The propagation of chaos for mean field interacting particle system characterizes a dynamical mechanism on the time variable of Kac's chaotic property. It is also named Boltzmann's property in \cite{Kac}, where a simplified  homogeneous Boltzmann equation is derived by the approximation of Poisson-like processes originated from dilute monatomic gases. The quantitative convergence rate of the joint distribution of $k$ interacting particles converging to that of the solution to $k$ independent  McKean-Vlasov SDEs attracts much attention. There are various notions of propagation of chaos(strong sense, Wasserstein distance, relative entropy, Fisher information, etc.), see for instance \cite{HS,JW2} for more details.

For any Polish space $(E,\rho)$, let $\scr P(E)$ be the collection of all probability measures on $E$ equipped with the weak topology.
Fix $T>0$. Let $W_t$ be an $n$-dimensional Brownian motion on some complete filtration probability space $(\Omega, \scr F, (\scr F_t)_{t\geq 0},\P)$. $b:[0,T]\times \R^d\times\scr P(\R^d)\to\R^d$, $\sigma:[0,T]\times \R^d\times\scr P(\R^d)\to\R^d\otimes\R^{n}$ are measurable and are bounded on bounded set.

Let $X_0$ be an $\F_0$-measurable $\R^d$-valued random variable,
$N\ge1$ be an integer and $(X_0^i,W^i_t)_{1\le i\le N}$ be i.i.d.\,copies of $(X_0,W_t).$ Consider the mean field interacting particle system
\begin{align}\label{GPS00}\d X^{i,N}_t=b_t(X_t^{i,N}, \hat\mu_t^N)\d t+\sigma_t(X^{i,N}_t, \hat\mu_t^N) \d W^i_t,\ \ 1\leq i\leq N,
\end{align}
where the distribution of $(X_0^{1,N},X_0^{2,N},\cdots,X_0^{N,N} )$ is exchangeable and $\hat\mu_t^N$ is the empirical distribution of $(X_t^{i,N})_{1\leq i\leq N}$, i.e.
\begin{equation*}
 \hat\mu_t^N =\ff{1}{N}\sum_{j=1}^N\dd_{X_t^{j,N}}.
 \end{equation*}
Under some reasonable assumptions, the limit equation of a single particle in \eqref{GPS00} becomes
\begin{align*}\d X_t^i= b_t(X_t^i, \L_{X_t^i})\d t+  \sigma_t(X^i_t,\L_{X_t^i}) \d W^i_t,
\end{align*}
where $\L_{X_t^i}$ is the distribution of $X_t^i$.
Then $X_t^1$ solves the McKean-Vlasov SDE
\begin{align*}\d  X_t=b_t(X_t,\L_{X_t})\mathrm{d} t+\sigma_t(X_t,\L_{X_t})\mathrm{d} W_t.
\end{align*}
This type of SDE, also called distribution dependent SDE, was first introduced in \cite{McKean}. It is now widely applied in neural networks, mean field games and financial mathematics and has been extensively investigated in recent years. One can refer to \cite{LL} and the monograph \cite{WR2024} for various contributions on it.
%We just list some references here for readers to refer: \cite{SY} for the central limit theorem and moderate deviation principle; \cite{Song} for the Bismut derivative formula and gradient estimate of McKean-Vlasov SDEs with jumps and so on.

To quantify the propagation of chaos, one may equip some probability distances on certain subspace of $\scr P(\R^d)$. To this end, for $p>0$, let
$$\scr P_p(\R^d):=\big\{\mu\in \scr P(\R^d): \mu(|\cdot|^p)<\infty\big\},$$
which is a Polish space under the $L^p$-Wasserstein distance
$$\W_p(\mu,\nu)= \inf_{\pi\in \C(\mu,\nu)} \bigg(\int_{\R^{d}\times\R^{d}} |x-y|^p \pi(\d x,\d y)\bigg)^{\ff 1 {p\vee 1}},\ \  \mu,\nu\in \scr P_p(\R^d),$$ where $\C(\mu,\nu)$ is the set of all couplings of $\mu$ and $\nu$.
Let $\alpha\in(0,1]$ and $\rho_\alpha(x,y):=|x-y|^\alpha,\ \ x,y\in\R^d$. Then $(\R^d,\rho_\alpha)$ is a Polish space so that we have the Kantorovich-Rubinstein duality formula
$$\W_\alpha(\gamma,\tilde{\gamma})=\sup_{[f]_\alpha\leq 1}|\gamma(f)-\tilde{\gamma}(f)|,\ \ \gamma,\tilde{\gamma}\in \scr P_\alpha(\R^d),$$
where $[f]_\alpha:=\sup_{x\neq y}\frac{|f(x)-f(y)|}{|x-y|^\alpha}$, see for instance \cite[Theorem 5.10]{Chen} or \cite[Theorem 1.14]{Villani}. We will also use the total variation distance, that is
$$\|\gamma-\tilde{\gamma}\|_{var}=\sup_{|f|\leq 1}|\gamma(f)-\tilde{\gamma}(f)|,\ \ \gamma,\tilde{\gamma}\in \scr P(\R^d).$$
When we consider the interacting particle system, the state space is $(\R^d)^N$ so that some appropriate distance on $\scr P((\R^d)^N)$ is also need to be provided.
For $\alpha\in(0,1]$, define  $$\rho_{\ell_\alpha}:=\sum_{i=1}^k|x^i-y^i|^\alpha,\ \ x=(x^1,x^2,\cdots,x^k), y=(y^1,y^2,\cdots,y^k)\in (\R^d)^k. $$
Let $\W_{\ell_\alpha}$ be the Wasserstein distance induced by $\rho_{\ell_\alpha}$, i.e.
$$\W_{\ell_\alpha}(\mu,\nu)= \inf_{\pi\in \C(\mu,\nu)}\int_{(\R^{d})^k\times(\R^{d})^k} \rho_{\ell_\alpha}(x,y) \pi(\d x,\d y),\ \ \mu,\nu\in\scr P_{\alpha}((\R^d)^k).$$
Let us recall some results on quantitative propagation of chaos. When $X_0^{i,N}=X_0^i, 1\leq i\leq N$, $b_t(x,\mu)=\int_{\R^d}\tilde{b}_t(x,y)\mu(\d y)$, $\sigma_t(x,\mu)=\int_{\R^d}\tilde{\sigma}_t(x,y)\mu(\d y)$ for some functions $\tilde{b},\tilde{\sigma}$ Lipschitz continuous in spatial variables uniformly in time variable, it is well-known that the quantitative propagation of chaos in strong sense holds, i.e. there exists a constant $c>0$ depending on $T$ such that
$$\E|X_t^{1,N}-X_t^{1}|^2\leq \frac{c}{N}, \ \ t\in[0,T],$$
see for instance \cite{McKean67,SZ}.

 In \cite{BJW,JW,JW1}, the authors introduce the entropy method to derive the quantitative propagation of chaos in relative entropy in the case $\sigma=I_{d\times d}$ and the interaction is bounded or singular, and the result is the entropy-entropy propagation of chaos:
\begin{align}\label{E-E}
\nonumber&\mathrm{Ent}(\L_{(X_t^{1,N},X_t^{2,N},\cdots,X_t^{k,N})}|\L_{(X_t^1,X_t^2,\cdots, X_t^k)})\\
&\leq \frac{ck}{N}+ck \frac{\mathrm{Ent}(\L_{(X_0^{1,N},X_0^{2,N},\cdots,X_0^{N,N})}|\L_{(X_0^1,X_0^2,\cdots, X_0^N)})}{N},\ \ t\in[0,T], 1\leq k\leq N
\end{align}
for some constant $c>0$ depending on $T$, where the relative entropy between $\mu,\nu\in\scr P(E)$ is defined as
$$\mathrm{Ent}(\nu|\mu)=\left\{
  \begin{array}{ll}
    \nu(\log(\frac{\d \nu}{\d \mu})), & \hbox{$\nu$ \text{is absolutely continuous w.r.t.} $\mu$;} \\
    \infty, & \hbox{otherwise.}
  \end{array}
\right.$$
To derive the quantitative rate of $O(k/N)$ in \eqref{E-E}, one need assume that
\begin{align}\label{entro}\sup_{N\ge1 }\mathrm{Ent}(\L_{(X_0^{1,N},X_0^{2,N},\cdots,X_0^{N,N})}|\L_{(X_0^1,X_0^2,\cdots, X_0^N)})<\infty.
\end{align}
We also mention that the authors in \cite{CFGZW} derive the propagation of chaos in relative entropy when the diffusion coefficients are interacting and the Landau equation is covered.
When  $\sigma=I_{d\times d}$ and the interaction is bounded or Lipschitz continuous, \cite{L21} adopts the BBGKY hierarchy to derive the sharp rate $O((k/N)^2)$ of $\mathrm{Ent}(\L_{(X_t^{1,N},X_t^{2,N},\cdots,X_t^{k,N})}|\L_{(X_t^1,X_t^2,\cdots, X_t^k)})$ for $t>0$ if so is at $t=0$.

Observe that \eqref{entro} and the result in \cite{L21} exclude the case $X_0^{i}=x\in\R^d, X_0^{i,N}=x+\frac{1}{N}, 1\leq i\leq N$. Inspired by the log-Harnack inequality \eqref{WHloa} below, \cite{HX23e} studied the entropy-cost type propagation of chaos with Lipschitz continuous interaction and multiplicative noise.
In general,
let $\W_1^\Psi$ be the $L^1$-transportation cost induced by a cost function $\Psi$ (a measurable function $\Psi:E\times E\to[0,\infty)$ with $\Psi(x,x)=0, x\in E$), which is defined by
 $$\W_1^\Psi(\mu,\nu)=\inf_{\pi\in \C(\mu,\nu)} \int_{E\times E} \Psi(x,y) \pi(\d x,\d y),\ \ \mu,\nu\in\scr P(E).$$
The entropy-cost type propagation of chaos is formulated as
\begin{align}\label{ECE}\nonumber&\mathrm{Ent}(\L_{(X_{t}^{1,N},X_{t}^{2,N},\cdots, X_{t}^{k,N})}|\L_{(X_{t}^{1},X_{t}^{2},\cdots,X_{t}^{k})})\\
&\leq k g(N)+kh(t)\frac{\W_1^\Psi(\L_{(X_0^{1,N},X_0^{2,N},\cdots,X_0^{N,N})},\L_{(X_0^{1},X_0^{2},\cdots,X_0^{N})})}{N},\ \ t\in(0,T], 1\leq k\leq N
\end{align}
 for some decreasing function  $g:[0,\infty)\to[0,\infty)$ with $\lim_{N\to \infty}g(N)=0$, some measurable function $h:(0,\infty)\to(0,\infty)$ with $\lim_{t\to 0}h(t)=\infty$ and the $L^1$-transportation cost $\W_1^\Psi$ induced by some cost function $\Psi$.
The function $h(t)$ reflects the regularity of the noise in the interacting system. The entropy-cost propagation of chaos means that the Kac's chaotic property in relative entropy is generated at any positive time even if it does not hold at initial time. Compared wit \eqref{E-E}, the entropy-cost type estimate \eqref{ECE} allows $\L_{(X_0^{1,N},X_0^{2,N},\cdots,X_0^{N,N})}$ not to be absolutely continuous w.r.t. $\L_{(X_0^1,X_0^2,\cdots, X_0^N)}$.

When $b_t(x,\mu)=\int_{\R^d}\tilde{b}_t(x,y)\mu(\d y)$, $\sigma_t(x,\mu)=\sigma(t,x)$ with $\delta^{-1}\leq\sigma\sigma^\ast\leq \delta$ for some $\delta>1$ and $\tilde{b}, \sigma$ are Lipschitz continuous in spatial variable uniformly in time variable, the author has used the coupling by change of measure to derive \eqref{ECE} for $g(N)=\frac{c}{N}, h(t)=\frac{c}{t}$ and $\Psi(x,y)=|x-y|^2, x,y\in(\R^d)^N$ in \cite{HX23e}. In fact, one can easily derive from the Lipschitz continuity of $\tilde{b},\sigma$ that
\begin{align}\label{MYG}\sum_{i=1}^N\E\sup_{t\in[0,T]}|X_t^{i,N}-X_t^{i}|^2\leq c\sum_{i=1}^N\E|X_0^{i,N}-X_0^{i}|^2+c(1+\E|X_0^1|^2),
\end{align}
which together with the Lipschitz continuity of $\tilde{b}$ gives a nice estimate for the crucial quantity
\begin{align}\label{TYM}\E\left|\frac{1}{N}\sum_{m=1}^N\tilde{b}_t(X_t^{i,N}, X_t^{m,N})-\int_{\R^d}\tilde{b}_t(X_t^{i,N}, y)\L_{X_t^1}(\d y)\right|^2.
\end{align}
One can also refer to \cite{CLRW} for the kinetic system with additive noise, where the interaction is also Lipschitz continuous.

However, when $\tilde{b}$ is not Lipschitz continuous, \eqref{MYG} is no longer able to deal with \eqref{TYM}. In this paper, we will develop new technique to derive entropy-cost type propagation of chaos \eqref{ECE} without Lipschitz assumption on the interaction. The main results are Theorem \ref{POC} and Theorem \ref{POC13}. In Theorem \ref{POC}, the interaction is only assumed to be bounded measurable while the cost is an additional independent assumption on the initial values $(X_0^{i,N})_{i\leq i\leq N}$. To remove this assumption in Theorem \ref{POC13}, the interaction is required to be $\alpha$-H\"{o}lder continuous with $\alpha\in(0,\frac{1}{2}]$. In Theorem \ref{POC13}, we also derive some by-products: $\W_{\ell_{\beta}}$-$\W_{\ell_{1}}$ type with $\beta\in(0,1)$ and $TV$-$\W_{\ell_{1}}$ type propagation of chaos, which are of independent interests.

 %One can also refer to \cite{Deng} for SDEs driven by subordinated Brownian motions and \cite{Fan} for the SDEs driven by fractional Brownian motion.

%So, we have
%\begin{align*}
%|X_t-Y_t|^2&=|X_0-Y_0|^2+2\int_0^t\<b_s(X_s,\L_{X_s|\F_s^B})-b_s(Y_s,\L_{Y_s|\F_s^B}),X_s-Y_s\>\d s\\
%&\leq |X_0-Y_0|^2+c\int_0^t\left(\W_2(\L_{X_s|\F_s^B}, \L_{Y_s|\F_s^B})^2+|X_s-Y_s|^2\right)\d s.
%\end{align*}
Before moving on, we simply recall Wang's Harnack inequality with power and log-Harnack inequality which are the main tools in the proof of main results. Let $\{P_t\}_{t\geq 0}$ be a Markov semigroup associated to a transition kernel $\{P_t(x,\d y)\}_{t\geq 0,x\in\mathbb{B}}$ on a Banach space $\mathbb{B}$. Wang's Harnack inequality with power is stated as follows: there exists $p>1$ such that
\begin{align}\label{WHa}(P_t f(x))^p\leq P_tf^p(y)\exp\{\Phi_p(t,x,y)\}, \ \ t>0, x,y\in\mathbb{B}, f\in\scr B^+_b(\mathbb{B})
\end{align}
for some measurable function $\Phi_p:(0,\infty)\times\mathbb{B}\times \mathbb{B}\to[0,\infty)$ and $\scr B^+_b(\mathbb{B})$ being the set of all non-negative bounded and measurable functions on $\mathbb{B}$. By \cite[Theorem 1.4.2(1)]{Wbook}, \eqref{WHa} is equivalent to the estimate
\begin{align}\label{Ret}\int_{\mathbb{F}}\left(\frac{\d P_t(x,\d z)}{\d P_t(y,\d z)}\right)^{\frac{p}{p-1}}\d P_t(y,\d z)\leq \exp\left\{\frac{\Phi_p(t,x,y)}{p-1}\right\}, \ \ t>0, x,y\in\mathbb{B}.
\end{align}
The left hand side of \eqref{Ret} is related to the R\'{e}nyi entropy, which is defined as
$$D_p(\mu|\nu)=\frac{1}{p-1}\log \int_{\R^d}(\frac{\d \mu}{\d \nu})^{p-1}\d \mu, \ \ p>0.$$
Note that
$$\mathrm{Ent}(\mu|\nu)=\lim_{p\to 1}D_p(\mu|\nu)=\int_{\R^d} \log \frac{\d
\mu}{\d \nu}\d \mu.$$
So, \eqref{Ret} is also equivalent to R\'{e}nyi entropy-cost inequality:
\begin{align*}(p-1)\log \int_{\mathbb{F}}\left(\frac{\d P_t(x,\d z)}{\d P_t(y,\d z)}\right)^{\frac{p}{p-1}}\d P_t(y,\d z)\leq \Phi_p(t,x,y), \ \ t>0, x,y\in\mathbb{B}.
\end{align*}
As a weaker inequality than \eqref{WHa}, the log-Harnack inequality is formulated as
\begin{align}\label{WHloa}P_t \log f(x)\leq \log P_tf(y)+\tilde{\Phi}(t,x,y), \ \ t>0, x,y\in\mathbb{F}, f\in\scr B^+_b(\mathbb{B}), f\geq 1
\end{align}
for some measurable function $\tilde{\Phi}:(0,\infty)\times\mathbb{B}\times \mathbb{B}\to[0,\infty)$.
By \cite[Theorem 1.4.2(2)]{Wbook}, \eqref{WHloa} is equivalent to
\begin{align}\label{entco}\mathrm{Ent}( P_t(x,\cdot)|P_t(y,\cdot))\leq \tilde{\Phi}(t,x,y), \ \ t>0, x,y\in\mathbb{B}.
\end{align}
One can refer to the monograph \cite{Wbook} for the systematic  introduction of \eqref{WHa} and \eqref{WHloa}. In  \cite{Wbook}, the author derives many other applications of \eqref{WHa} and \eqref{WHloa} such as the uniqueness of invariant probability measure, strong Feller property and gradient estimate and gives plentiful models in which  \eqref{WHa} and \eqref{WHloa} hold.

The remaining of the paper is organized as follows: In Section 2,  we state the main results on entropy-cost type propagation of chaos. Theorem \ref{POC} is concentrated on the case of bounded measurable interaction while Theorem \ref{POC13} consider the H\"{o}lder continuous interaction case. The proofs of them will be provided in Section 3 and Section 4 respectively.
\section{Main results}

Consider the mean field interacting particle system
\begin{align*}\d X^{i,N}_t=b_t(X_t^{i,N}, \ff{1}{N}\sum_{j=1}^N\dd_{X_t^{j,N}})\d t+\sigma_t(X^{i,N}_t) \d W^i_t,\ \ 1\leq i\leq N,
\end{align*}
and the independent McKean-Vlasov SDEs
\begin{align}\label{Eb1}\d X_t^i= b_t(X_t^i, \L_{X_t^i})\d t+  \sigma_t(X^i_t) \d W^i_t,\ \ t\in [0,T],\ \ 1\leq i\leq N.
\end{align}
Let $P_t^\ast\mu_0$ be the distribution of the solution to \eqref{Eb1} with initial distribution $\mu_0\in\scr P(\R^d)$, $(P_t^{[k],N})^\ast\mu_0^N$
be the distribution of $(X_t^{i,N})_{1\leq i\leq k}$ with initial  exchangeable distribution $\mu_0^N\in\scr P((\R^d)^N)$. Let $\mu_0^{\otimes k}$ denote the $k$ independent product of $\mu_0$.

To derive the entropy-cost type propagation of chaos, we make the following assumptions.
\begin{enumerate}
\item[{\bf(A1)}] There exist constants $K_\sigma>0$ and $\delta\geq 1$ such that
\begin{align*}
\|\sigma_t(x)-\sigma_t(y)\|_{HS}\leq K_\sigma|x-y|,\ \ \delta^{-1}\leq\sigma\sigma^\ast\leq \delta, \ \ t\in[0,T], x,y\in\R^d.
\end{align*}
\item[{\bf(A2)}] $b_t(x,\mu)=\bar{b}_t(x)+\int_{\R^d}\tilde{b}_t(x,y)\mu(\d y)$. $\tilde{b}$ is bounded and there exists $K_b>0$ such that
    $$|\bar{b}_t(x)-\bar{b}_t(y)|\leq K_b|x-y|,\ \ |\bar{b}_t(0)|\leq K_b,\ \ t\in[0,T], x,y\in\R^d.$$
\end{enumerate}
%The Lipschitz continuity of $\bar{b}$ is required to ensure the gradient estimate \eqref{Grc} below.

\begin{thm}\label{POC} Assume {\bf(A1)}-{\bf(A2)}, $(X_0^{i,N})_{1\leq i\leq N}$ are i.i.d. and $\L_{X_0^{1,N}},\L_{X_0^{1}}\in \scr P_2(\R^d)$. Then there exists a constant $C>0$ depending on $K_b,K_\sigma, d,\delta,T,\|\tilde{b}\|_\infty$ such that for any $\eta\in(0,1)$, $t\in(0,T]$ and $1\leq k\leq N$,
\begin{align}\label{VEN}\|(P_t^{[k],N})^\ast\mu^N_0-(P_t^\ast\mu_0)^{\otimes k}\|_{var}^2
&\leq 2\mathrm{Ent}((P_t^{[k],N})^\ast\mu^N_0|(P_t^\ast\mu_0)^{\otimes k})\\
\nonumber&\leq\frac{Ck}{N}+Ck\left(\frac{t^{1-\eta}}{1-\eta}+\frac{1}{t} \right) \W_1^{\Psi_\eta}(\L_{X_0^{1,N}},\L_{X_0^{1}}),
\end{align}
where $\Psi_\eta(x,y)=|x-y|^{2\eta}+|x-y|^2, x,y\in\R^d$.
\end{thm}
\begin{rem} Compared with \eqref{E-E}, \eqref{VEN} only involves in $\ \W_1^{\Psi_\eta}(\L_{X_0^{1,N}},\L_{X_0^{1}})$ while the price is that $(X_0^{i,N})_{1\leq i\leq N}$ are required to be i.i.d.. Moreover, due to technique reasons, $\eta<1$ in \eqref{VEN} is  crucial and the right hand side will blow up if $\eta=1$. In \cite{HX23e}, when the interaction is Lipschitz continuous, $\W_1^{\Psi_\eta}$ can be replaced by $\W_2^2$ and $(X_0^{i,N})_{1\leq i\leq N}$ are only assumed to be exchangeable.
\end{rem}
To get rid of the assumption that $(X_0^{i,N})_{1\leq i\leq N}$ are i.i.d., the interaction is required to be $\alpha$-H\"{o}lder continuous with small $\alpha\in(0,\frac{1}{2}]$. More precisely, we assume
\begin{enumerate}
    \item[{\bf(A2')}] Besides {\bf(A2)}, there exists a constant $\alpha\in(0,\frac{1}{2}]$ such that
    $$|\tilde{b}_t(x,y)-\tilde{b}_t(\tilde{x},\tilde{y})|\leq K_b(|x-\tilde{x}|^\alpha+|y-\tilde{y}|^\alpha),\ \ t\in[0,T], x,\tilde{x},y,\tilde{y}\in\R^d.$$
    \end{enumerate}

\begin{thm}\label{POC13} Assume {\bf(A1)}, {\bf(A2')}, $\mu_0\in \scr P_{2}(\R^d)$ and $\mu_0^N\in\scr P_1((\R^d)^N)$ be exchangeable. Then the following assertions hold.
\begin{enumerate}
\item[(i)]For any $\beta\in(0,1]$, there exists a constant $c>0$ depending on $K_\sigma, K_b, d,\delta,T,\beta,\alpha,\|\tilde{b}\|_\infty$ such that
\begin{align}\label{nny}
\nonumber& \W_{\ell_\beta}((P_t^{[k],N})^\ast\mu^N_0,(P_t^\ast\mu_0)^{\otimes k})\\
&\leq \frac{ck}{N}(t\wedge 1)^{-\frac{1-\beta}{2}}\W_{\ell_1}(\mu^N_0,\mu_0^{\otimes N})+ \frac{ck}{\sqrt{N}},\ \ t\in(0,T], 1\leq k\leq N.
\end{align}
Moreover, it holds
\begin{align}\label{nny13}
& \|(P_t^{[k],N})^\ast\mu^N_0-(P_t^\ast\mu_0)^{\otimes k}\|_{var}\leq \frac{ck}{N}(t\wedge 1)^{-\frac{1}{2}}\W_{\ell_1}(\mu^N_0,\mu_0^{\otimes N})+ \frac{ck}{\sqrt{N}}
\end{align}
for some constant $c>0$ depending on $K_\sigma, K_b, d,\delta,T,\alpha,\|\tilde{b}\|_\infty$
\item[(ii)]If in addition, $\mu_0^N\in\scr P_2((\R^d)^N)$. Then there exists a constant $C>0$ depending on $K_b,K_\sigma, d,\delta,T,\alpha,\|\tilde{b}\|_\infty$ such that
\begin{align}\label{VENtk}\nonumber&\mathrm{Ent}((P_t^{[k],N})^\ast\mu^N_0|(P_t^\ast\mu_0)^{\otimes k})\\
&\leq\frac{Ck}{\sqrt{N}}+C\frac{k}{N}\left(t^{\frac{1+2\alpha}{2}}+\frac{1}{t} \right) \W_1^{\Psi}(\mu_0^N,\mu_0^{\otimes N}), \ \ t\in(0,T], 1\leq k\leq N,
\end{align}
where $\Psi(x,y)=\sum_{i=1}^N(|x^i-y^i|+|x^i-y^i|^2), x=(x^1,x^2,\cdots,x^N), y=(y^1,y^2,\cdots,y^N)\in(\R^d)^N$.
\end{enumerate}
\end{thm}
\section{Proof of Theorem \ref{POC}}
Before proving Theorem \ref{POC}, we first present two lemmas which will be used in the sequel. The following large deviation type estimate, which can be viewed as the law of large number in exponential scale, is very useful in the proof of quantitative propagation of chaos in relative entropy in the case with bounded interaction, see \cite[Theorem 3]{JW}. We should remark that although \cite[Theorem 3]{JW} only considers the state space $\Pi^d$, the $d$-dimensional torus,  it is a universal result and also available in general Banach spaces since its proof only requires the i.i.d. property of the mentioned random variables.
 \begin{lem}\label{EUP} Let $\mathbb{B}$ be a Banach space. Assume that $\{\xi_i\}_{i\geq 1}$ are i.i.d. $\mathbb{B}$-valued random variables. Then for any bounded measurable function $\phi:\mathbb{B}\to\R$ with $\|\phi\|_\infty \leq (8\e)^{-1}$, we have
$$\E\exp\left\{ N\left|\frac{1}{N}\sum_{i=1}^N\phi(\xi_1,\xi_i)-\int_{\mathbb{B}}\phi(\xi_1,y)\L_{\xi_1}(\d y)\right|^2\right\}\leq 3.$$
\end{lem}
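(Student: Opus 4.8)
The assertion is a Gaussian-type concentration bound for the self-normalised empirical average of the bounded kernel $\phi$ (the notation $\phi(\xi_1,\xi_i)$ makes clear that $\phi$ is a bounded measurable function on $\mathbb B\times\mathbb B$). The plan is to condition on $\xi_1$, rewrite the exponent as a constant plus a normalised sum of i.i.d.\ centred bounded random variables, and then combine Hoeffding's lemma ($\E\e^{t\zeta}\le\e^{t^2(\beta-\alpha)^2/8}$ for a centred $\zeta$ with values in $[\alpha,\beta]$) with the Gaussian trick $\e^{u^2}=\E_g[\e^{\ss2\,ug}]$, $g$ standard normal. Set $a:=\|\phi\|_\infty\le(8\e)^{-1}$; since the quantity inside the exponential is deterministically bounded by $4Na^2$, every expectation below is finite and Fubini and the tower property apply without further comment.

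First I would put $\mu:=\L_{\xi_1}$ and, for fixed $z\in\mathbb B$, introduce the centred kernel $\psi_z(y):=\phi(z,y)-\int_{\mathbb B}\phi(z,y')\,\mu(\d y')$, so that $\int_{\mathbb B}\psi_z\,\d\mu=0$ and $\|\psi_z\|_\infty\le 2a$. Conditioning on $\xi_1=z$, the quantity inside the exponential equals $N Z_z^2$ with
\[
N^{1/2}Z_z=N^{-1/2}\psi_z(z)+Y_z,\qquad Y_z:=N^{-1/2}\sum_{i=2}^N\psi_z(\xi_i),
\]
where, under this conditioning, $\xi_2,\dots,\xi_N$ are i.i.d.\ of law $\mu$ and $\psi_z(z)$ is a constant of modulus at most $2a$. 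By the tower property it then suffices to bound $\E[\e^{N Z_z^2}\mid\xi_1=z]$ by $3$ uniformly in $z\in\mathbb B$ and integrate against $\mu$.

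For the i.i.d.\ part, Hoeffding's lemma applied to the centred $[-2a,2a]$-valued summands gives $\E[\e^{t\psi_z(\xi_i)}]\le\e^{2t^2a^2}$, hence $\E[\e^{tY_z}\mid\xi_1=z]\le\e^{2t^2a^2}$ for every $t\in\R$, i.e.\ $Y_z$ is conditionally sub-Gaussian with variance proxy $4a^2$. Applying the Gaussian trick, Fubini, and this bound, $\E[\e^{\lambda Y_z^2}\mid\xi_1=z]\le\E_g[\e^{4\lambda a^2 g^2}]=(1-8\lambda a^2)^{-1/2}$ whenever $8\lambda a^2<1$. Finally, the crude splitting
\[
N Z_z^2=\big(N^{-1/2}\psi_z(z)+Y_z\big)^2\le 2N^{-1}\psi_z(z)^2+2Y_z^2\le 8a^2+2Y_z^2
\]
yields $\E[\e^{N Z_z^2}\mid\xi_1=z]\le\e^{8a^2}(1-16a^2)^{-1/2}$, and since $a\le(8\e)^{-1}$ forces $8a^2\le(8\e^2)^{-1}$ and $16a^2\le(4\e^2)^{-1}<1$, the right-hand side is at most $\exp\{(8\e^2)^{-1}\}\,(1-(4\e^2)^{-1})^{-1/2}$, which is comfortably below $3$ and independent of $z$. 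Integrating in $z$ finishes the argument (the degenerate case $N=1$ is covered by reading the empty sum as $0$).

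The computations are entirely routine. The only mildly delicate point, and the one that dictates the hypothesis $\|\phi\|_\infty\le(8\e)^{-1}$, is that after the factor-$2$ loss in $(N^{-1/2}\psi_z(z)+Y_z)^2\le 2N^{-1}\psi_z(z)^2+2Y_z^2$ one still needs $8\lambda a^2<1$ at $\lambda=2$; the stated threshold is chosen for this bookkeeping and is far from necessary, and under it the constant $3$ is far from sharp. A sharper statement would instead treat the $O(N^{-1/2})$ constant shift $N^{-1/2}\psi_z(z)$ without doubling the exponent of the exponential-moment term. Beyond keeping these constants straight there is no genuine obstacle.
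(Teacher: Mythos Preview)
Your argument is correct and complete: conditioning on $\xi_1=z$, the remaining summands are i.i.d.\ centred and bounded in $[-2a,2a]$, Hoeffding's lemma gives the sub-Gaussian bound $\E[\e^{tY_z}\mid\xi_1=z]\le\e^{2a^2t^2}$, the Gaussian linearisation $\e^{\lambda u^2}=\E_g[\e^{\sqrt{2\lambda}\,ug}]$ converts this into $\E[\e^{\lambda Y_z^2}\mid\xi_1=z]\le(1-8\lambda a^2)^{-1/2}$, and the crude $(A+B)^2\le 2A^2+2B^2$ splitting absorbs the diagonal term $N^{-1/2}\psi_z(z)$ at the cost of replacing $\lambda=1$ by $\lambda=2$. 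The numerical check $\e^{(8\e^2)^{-1}}(1-(4\e^2)^{-1})^{-1/2}\approx 1.04<3$ is fine.

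This is, however, a genuinely different route from the paper's. The paper does not argue directly: it centres $\phi$ to get $\psi$ with $\|\psi\|_\infty\le(4\e)^{-1}$ and then simply invokes \cite[Theorem~3]{JW} (Jabin--Wang), whose combinatorial cancellation estimate yields the explicit bound
\[
2\Big(1+\tfrac{10(\e\|\psi\|_\infty)^4}{(1-(\e\|\psi\|_\infty)^4)^3}+\tfrac{4(\e\|\psi\|_\infty)^4}{1-4(\e\|\psi\|_\infty)^4}\Big),
\]
and the final line is just the arithmetic showing this is at most $3$ when $\e\|\psi\|_\infty\le 1/4$. Your approach is more elementary and self-contained, requires no external reference, and in fact produces a much smaller constant; the paper's approach buys brevity by outsourcing all the work to the Jabin--Wang lemma, whose strength (designed for delicate cancellations in $W^{-1,\infty}$ kernels) is not really needed here since $\phi$ is merely bounded.
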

\begin{proof}Let $\psi(z,x)=\phi(z,x)-\int_{\mathbb{B}}\phi(z,z')\L_{\xi_1}(\d z')$. Then we get $\int_{\mathbb{B}}\psi(z,x)\L_{\xi_1}(\d x)=0, z\in\mathbb{B}$ and $\|\psi\|_\infty\leq 2\|\phi\|_\infty\leq (4\e)^{-1}$. Applying \cite[Theorem 3]{JW}, we have
\begin{align*}
&\E\exp\left\{ N\left|\frac{1}{N}\sum_{i=1}^N\phi(\xi_1,\xi_i)-\int_{\mathbb{B}}\phi(\xi_1,y)\L_{\xi_1}(\d y)\right|^2\right\}\\
&\leq 2\left(1+\frac{10(\e\|\psi\|_\infty)^4}{(1-(\e\|\psi\|_\infty)^4)^3}+\frac{4(\e\|\psi\|_\infty)^4} {1-4(\e\|\psi\|_\infty)^4}\right)\\
&\leq 2\left(1+\frac{10\times4^{-4}}{(1-4^{-4})^3}+\frac{4\times4^{-4}} {1-4\times4^{-4}}\right)\leq 3.
\end{align*}
So, we complete the proof.
\end{proof}

The next lemma characterizes the relationship of $L^1$-transportation costs between a single particle and $N$ particles, which is of independent interest. One can also refer to \cite[Proposition 2.6]{HS} for some similar results.
\begin{lem}\label{transport} Let $(E,\rho)$ be a Polish space and $m$ be a positive integer. For any $1\leq i\leq m$, let $\Psi^i:E\times E\to[0,\infty)$ be measurable and define
$$\tilde{\Psi}(x,y)=\sum_{i=1}^m\Psi^i(x^i,y^i),\ \ x=(x^1,x^2,\cdots,x^m),y=(y^1,y^2,\cdots,y^m)\in E^m.$$
Then it holds
$$\W_1^{\tilde{\Psi}}\left(\prod_{i=1}^m \mu^i, \prod_{i=1}^m \nu^i\right)=\sum_{i=1}^m\W_1^{\Psi^i}(\mu^i, \nu^i),\ \ \mu^i,\nu^i\in\scr P(E), 1\leq i\leq m.$$
\end{lem}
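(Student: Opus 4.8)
The plan is to prove the two inequalities separately. For the inequality $\W_1^{\tilde\Psi}\big(\prod_i\mu^i,\prod_i\nu^i\big)\le\sum_i\W_1^{\Psi^i}(\mu^i,\nu^i)$, the natural construction is to take near-optimal couplings componentwise and glue them by independence. Concretely, fix $\varepsilon>0$ and for each $i$ pick $\pi^i\in\C(\mu^i,\nu^i)$ with $\int_{E\times E}\Psi^i(x^i,y^i)\,\pi^i(\d x^i,\d y^i)\le\W_1^{\Psi^i}(\mu^i,\nu^i)+\varepsilon/m$; such a $\pi^i$ exists by definition of the infimum (no measurable-selection subtleties are needed since we only need an $\varepsilon$-minimizer, not an exact one, and even exact optimizers exist here by a standard tightness/lower-semicontinuity argument, but I will avoid invoking that). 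Then $\pi:=\bigotimes_{i=1}^m\pi^i$, viewed as a measure on $E^m\times E^m$ after the obvious rearrangement of coordinates, is a coupling of $\prod_i\mu^i$ and $\prod_i\nu^i$, and by Fubini
\[
\int_{E^m\times E^m}\tilde\Psi(x,y)\,\pi(\d x,\d y)=\sum_{i=1}^m\int_{E\times E}\Psi^i(x^i,y^i)\,\pi^i(\d x^i,\d y^i)\le\sum_{i=1}^m\W_1^{\Psi^i}(\mu^i,\nu^i)+\varepsilon.
\]
Letting $\varepsilon\downarrow0$ gives the upper bound.

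For the reverse inequality $\W_1^{\tilde\Psi}\big(\prod_i\mu^i,\prod_i\nu^i\big)\ge\sum_i\W_1^{\Psi^i}(\mu^i,\nu^i)$, the idea is that any coupling $\pi\in\C\big(\prod_i\mu^i,\prod_i\nu^i\big)$ on $E^m\times E^m$ has $i$-th marginal $\pi^i$ on the pair $(x^i,y^i)\in E\times E$, and this $\pi^i$ is automatically a coupling of $\mu^i$ and $\nu^i$ (its first $E$-marginal is the $i$-th marginal of $\prod_j\mu^j$, which is $\mu^i$, and similarly for the second). Since $\tilde\Psi(x,y)=\sum_i\Psi^i(x^i,y^i)$ depends on $(x,y)$ only through the pairs $(x^i,y^i)$,
\[
\int_{E^m\times E^m}\tilde\Psi(x,y)\,\pi(\d x,\d y)=\sum_{i=1}^m\int_{E\times E}\Psi^i(x^i,y^i)\,\pi^i(\d x^i,\d y^i)\ge\sum_{i=1}^m\W_1^{\Psi^i}(\mu^i,\nu^i).
\]
Taking the infimum over $\pi$ gives the lower bound, and combining the two completes the proof.

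I do not expect a serious obstacle here; the statement is essentially a tensorization property of the $L^1$-transportation cost, and the only points to be careful about are bookkeeping with the coordinate rearrangement identifying $(E\times E)^m$ with $E^m\times E^m$, and the measurability of $\tilde\Psi$ (immediate, as a finite sum of measurable functions composed with coordinate projections). If one wishes to state the result with exact optimal couplings rather than $\varepsilon$-optimal ones, one would invoke existence of optimizers for $\W_1^{\Psi^i}$, which holds whenever $\Psi^i$ is, say, lower semicontinuous; but since the lemma only asserts an equality of the infimum values, the $\varepsilon$-approximation argument above suffices and keeps the statement at the stated level of generality (merely measurable $\Psi^i$). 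The mild subtlety worth a sentence in the write-up is that the infimum in $\W_1^{\Psi^i}(\mu^i,\nu^i)$ could a priori be $+\infty$; in that case both sides are $+\infty$ and the identity still holds trivially, so we may assume throughout that each $\W_1^{\Psi^i}(\mu^i,\nu^i)<\infty$.
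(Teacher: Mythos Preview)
Your proof is correct and follows essentially the same approach as the paper: project an arbitrary coupling on $E^m\times E^m$ to its $i$-th pair-marginals for the lower bound, and tensorize componentwise couplings for the upper bound. The only cosmetic difference is that the paper handles the upper bound by directly using $\inf_{\pi_1,\dots,\pi_m}\sum_i\int\Psi^i\,\d\pi_i=\sum_i\inf_{\pi_i}\int\Psi^i\,\d\pi_i$ (valid since the terms depend on disjoint variables), whereas you pass through $\varepsilon$-optimal couplings; both arguments are equivalent.
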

\begin{proof} For any $1\leq i\leq m$, let $P_i(x)=x^i,x=(x^1,x^2,\cdots,x^m)\in E^m$. For any $\pi\in\C(\prod_{i=1}^m \mu^i, \prod_{i=1}^m \nu^i)$, it is clear that
$$\pi\circ(P_i)^{-1}\in \C(\mu^i, \nu^i),\ \ 1\leq i\leq m.$$
This implies that
\begin{align*}\sum_{i=1}^m\W_1^{\Psi^i}(\mu^i, \nu^i)&\leq \inf_{\pi\in\C(\prod_{i=1}^m \mu^i, \prod_{i=1}^m \nu^i)}\sum_{i=1}^m\int_{E\times E}\Psi^i(x^i,y^i)(\pi\circ(P_i)^{-1})(\d x^i,\d y^i)\\
&=\W_1^{\tilde{\Psi}}\left(\prod_{i=1}^m \mu^i, \prod_{i=1}^m \nu^i\right).
\end{align*}
On the other hand, for any $\pi_i\in \C(\mu^i, \nu^i),\ \ 1\leq i\leq m$, it is clear that
$$\prod_{i=1}^m\pi_i\in\C\left(\prod_{i=1}^m \mu^i, \prod_{i=1}^m \nu^i\right).$$
Then we have
\begin{align*}\W_1^{\tilde{\Psi}}\left(\prod_{i=1}^m \mu^i, \prod_{i=1}^m \nu^i\right)&\leq\inf_{\pi_i\in \C(\mu^i, \nu^i),1\leq i\leq m}\sum_{i=1}^m\int_{E\times E}\Psi^i(x,y)\d \pi_i(\d x,\d y)\\
&=\sum_{i=1}^m\inf_{\pi_i\in \C(\mu^i, \nu^i)}\int_{E\times E}\Psi^i(x,y)\d \pi_i(\d x,\d y)\\
&=\sum_{i=1}^m\W_1^{\Psi^i}(\mu^i, \nu^i).
\end{align*}
Therefore, the proof is completed.
\end{proof}
Now, we are in the position to prove Theorem \ref{POC}.
\begin{proof}[Proof of Theorem \ref{POC}]
Let $\mu_t^i=\L_{X_t^i},i\geq 1$. Since for any $i\geq 1$, \eqref{Eb1} is well-posed in $\scr P_2(\R^d)$ due to \cite[Theorem 1.1]{23R} for $V=(1+|\cdot|^2)$, $b^{(0)}=0$, $b^{(1)}_t(x,\mu)=\bar{b}(x)+\int_{\R^d}\tilde{b}_t(x,y)\mu(\d y)$, $\mu_t^i$ does not depend on $i$ and we write $\mu_t=\mu_t^i$.
Define
\begin{align*}\P^{0}:= \P(\ \cdot\ |\F_0),\ \ \E^{0}:= \E(\ \cdot\ | \F_0).
\end{align*}
When different probability spaces are involved in, we will use $\L_{\xi|\P}$ to denote the distribution of $\xi$ under $\P$.
%Let $\L_{\xi|\P^0}$ denote the conditional distribution of a random variable $\xi$ with respect to $\F_0$.
To complete the proof, we introduce
\begin{align}\label{GPSer}\d \bar{X}_t^{i}&= \bar{b}_t(\bar{X}_t^{i})\d t+\int_{\R^d}\tilde{b}_t(\bar{X}_t^{i}, y)\mu_t(\d y)\d t+  \sigma_t(\bar{X}^{i}_t) \d W^i_t,\ \ \bar{X}_0^{i}=X_0^{i,N}, 1\leq i\leq N.
\end{align}
We divide the proof into three steps:

{\bf(Step (1))} Estimate $\log\int_{(\R^{d})^N}\left(\frac{\d \L_{(\bar{X}_t^{i})_{1\leq i\leq N}|\P^0}}{\d \L_{(X_t^i)_{1\leq i\leq N}|\P^0}}\right)^{\frac{p}{p-1}}\d \L_{(X_t^i)_{1\leq i\leq N}|\P^0}$ for large enough $p>1$.

{\bf (Step (2))} Estimate $\mathrm{Ent}(\L_{(X_t^{i,N})_{1\leq i\leq N}|\P^0}|\L_{(\bar{X}_t^i)_{1\leq i\leq N}|\P^0})$.

{\bf (Step (3))} Estimate $\mathrm{Ent}(\L_{(X_t^{i,N})_{1\leq i\leq k}}|\L_{(X_t^i)_{1\leq i\leq k}})$ for any $1\leq k\leq N$.

We will finish the above three steps one by one.

(i) Let
\begin{align}\label{bmu}\tilde{b}^\mu_t(x)=\int_{\R^d}\tilde{b}_t(x,y)\mu_t(\d y),\ \ t\in[0,T].x\in\R^d.
\end{align}
Observe
\begin{align}\label{class}
\d X_t^{i}=\bar{b}_t(X_t^{i})\d t+\tilde{b}_t^{\mu}(X_t^{i})\d t+\sigma_t(X_t^{i})\d W_t^i,\ \ 1\leq i\leq N, X_0^i=X_0^i,
\end{align}
and
\begin{align}\label{class12}
\d \bar{X}_t^{i}=\bar{b}_t(\bar{X}_t^{i})\d t+\tilde{b}_t^{\mu}(\bar{X}_t^{i})\d t+\sigma_t(\bar{X}_t^{i})\d W_t^i,\ \ 1\leq i\leq N, \bar{X}_0^i=X_0^{i,N}.
\end{align}
By {\bf(A1)}-{\bf(A2)} and \cite[Theorem 2.2]{23R} for $b^{(0)}=\tilde{b}^\mu$, $b^{(1)}=\bar{b}$, we can find  constants $p>1, c(p)>0$ independent of $N$ and $\mu_t$ such that Wang's Harnack inequality with power $p$ holds, i.e.
\begin{align}\label{Har0}\nonumber\left(\E^{0} [f(\bar{X}_t^{i})]\right)^p
&\leq \E^{0} [f(X_t^{i})^p]\\
&\times\exp\left\{\frac{c(p)|X_0^{i,N}-X_0^{i}|^2}{t}\right\}, \ \ f\in \scr B^+_b(\R^d),t\in(0,T], 1\leq i\leq N.
\end{align}
Since both $(\bar{X}_t^{i})_{1\leq i\leq N}$ and $(X_t^{i})_{1\leq i\leq N}$ are independent under $\P^0$, we derive from \eqref{Har0} that
\begin{align*}\left(\E^{0} [F(\bar{X}_t^{1},\bar{X}_t^{2},\cdots,\bar{X}_t^{N})]\right)^p
&\leq \E^{0} [F(X_t^{1},X_t^{2},\cdots,X_t^{N})^p]\\
&\times\exp\left\{\frac{c(p)\sum_{i=1}^N|X_0^{i,N}-X_0^{i}|^2}{t}\right\}, \ \ F\in \scr B^+_b((\R^d)^N),t\in(0,T].
\end{align*}
This together with the equivalence between \eqref{WHa} and \eqref{Ret} implies for any $t\in(0,T]$,
\begin{align}\label{kyg}&(p-1)\log\int_{(\R^{d})^N}\left(\frac{\d \L_{(\bar{X}_t^{i})_{1\leq i\leq N}|\P^0}}{\d \L_{(X_t^i)_{1\leq i\leq N}|\P^0}}\right)^{\frac{p}{p-1}}\d \L_{(X_t^i)_{1\leq i\leq N}|\P^0}\leq \frac{c(p)\sum_{i=1}^N|X_0^{i,N}-X_0^{i}|^2}{t}.
\end{align}

(ii) Rewrite \eqref{GPSer} as
\begin{align}\label{MXY}\nonumber  \d \bar{X}_t^{i}&=  \bar{b}_t(\bar{X}_t^{i})\d t+\frac{1}{N}\sum_{m=1}^N\tilde{b}_t(\bar{X}_t^{i}, \bar{X}_t^{m})\d t+  \sigma_t(\bar{X}^{i}_t) \d \hat{W}^i_t,\ \ \bar{X}_0^{i}=X_0^{i,N}, 1\leq i\leq N,
\end{align}
where
\begin{equation*}\begin{split}
&\hat{W}_t^i :=   W_t^i-\int_0^t \gamma_s^i\d s,\\ &\gamma_t^i:=[\sigma_t^\ast(\sigma_t\sigma_t^\ast)^{-1}](\bar{X}^{i}_t) \left(\frac{1}{N}\sum_{m=1}^N\tilde{b}_t(\bar{X}_t^{i}, \bar{X}_t^{m})-\int_{\R^d}\tilde{b}_t(\bar{X}_t^{i}, y)\mu_t(\d y)\right).
\end{split}\end{equation*}
Let $t_0\in(0,T]$ and
\begin{align*}
& R_t:= \exp\left\{\int_0^{t}\sum_{i=1}^N\<\gamma_r^i, \d W^i _r\> -\ff 1 2 \int_0^{t} \sum_{i=1}^N|\gamma_r^i|^2\d r\right\}, \ \ t\in [0,t_0].
\end{align*}
Since $\tilde{b}$ is bounded, Girsanov's theorem implies that
$(\hat{W}_t^1,\hat{W}_t^2,\cdots, \hat{W}_t^N)_{t\in[0,t_0]}$ is an $(N\times n)$-dimensional Brownian motion under the weighted conditional probability $\d\Q^{0}_{t_0}=R_{t_0}\d\P^{0}$.
So, we have $\L_{(\bar{X}^i_t)_{1\leq i\leq N}|\Q^{0}_{t_0}}=\L_{(X_t^{i,N})_{1\leq i\leq N}|\P^{0}},\ \ t\in[0,t_0]$, which together with Young's inequality and $\delta^{-1}\leq \sigma\sigma^\ast\leq\delta$ implies that for any $0< F\in \B_b((\R^d)^N)$, it holds
\beg{align}\label{DDT} \nonumber&\E^{0} \log F(X_{t_0}^{1,N},X_{t_0}^{2,N},\cdots, X_{t_0}^{N,N})\\
\nonumber&\le \log \E^{0} [ F(\bar{X}_{t_0}^{1},\bar{X}_{t_0}^{2},\cdots,\bar{X}_{t_0}^{N})]+\E^{0} [R_{t_0}\log R_{t_0}] \\
\nonumber &\leq\log \E^{0} [ F(\bar{X}_{t_0}^{1},\bar{X}_{t_0}^{2},\cdots,\bar{X}_{t_0}^{N})]\\
 &+ \frac{\delta}{2} \sum_{i=1}^N\E_{\Q^{0}_{t_0}}\int_0^{t_0}\left|\frac{1}{N}\sum_{m=1}^N\tilde{b}_t(\bar{X}_t^{i}, \bar{X}_t^{m})-\int_{\R^d}\tilde{b}_t(\bar{X}_t^{i}, y)\mu_t(\d y)\right|^2\d t\\
\nonumber&=\log \E^{0} [ F(\bar{X}_{t_0}^{1},\bar{X}_{t_0}^{2},\cdots,\bar{X}_{t_0}^{N})]\\
\nonumber &+ \frac{\delta}{2} \sum_{i=1}^N\E^0\int_0^{t_0}\left|\frac{1}{N}\sum_{m=1}^N\tilde{b}_t(X_t^{i,N}, X_t^{m,N})-\int_{\R^d}\tilde{b}_t(X_t^{i,N}, y)\mu_t(\d y)\right|^2\d t\\
\nonumber&=:\log \E^{0} [ F(\bar{X}_{t_0}^{1},\bar{X}_{t_0}^{2},\cdots,\bar{X}_{t_0}^{N})]+I_1.
\end{align}
Let $(e_i)_{1\leq i\leq d}$ be a standard orthogonal basis in $\R^d$ and denote $\nu_t=\L_{\bar{X}_t^i|\P}$. Then it holds
\beg{align*} & I_1\leq \sum_{k=1}^d\delta\sum_{i=1}^N\E^0\int_0^{t_0}\left|\frac{1}{N}\sum_{m=1}^N\<\tilde{b}_t(X_t^{i,N}, X_t^{m,N}),e_k\>-\int_{\R^d}\<\tilde{b}_t(X_t^{i,N}, y),e_k\>\nu_t(\d y)\right|^2\d t\\
&+\delta \sum_{i=1}^N\E^0\int_0^{t_0}\left|\int_{\R^d}\tilde{b}_t(X_t^{i,N}, y)\nu_t(\d y)-\int_{\R^d}\tilde{b}_t(X_t^{i,N}, y)\mu_t(\d y)\right|^2\d t\\
&=:\sum_{k=1}^d\sum_{i=1}^N\frac{\delta}{\varepsilon N }I_{11,\vv}(i,k)+\sum_{i=1}^NI_{12}(i)
\end{align*}
for any $\varepsilon>0$.
Applying Young's inequality, we have
\begin{align*}
I_{11,\vv}(i,k)
& \leq \int_0^{t_0}\mathrm{Ent}(\L_{(X_t^{i,N})_{1\leq i\leq N}|\P^0}|\L_{(\bar{X}_t^i)_{1\leq i\leq N}|\P^0})\d t\\
&+\int_0^{t_0}\log\E^0\exp\left\{\varepsilon N\left|\frac{1}{N}\sum_{m=1}^N\<\tilde{b}_t(\bar{X}_t^{i}, \bar{X}_t^{m}),e_k\>-\int_{\R^d}\<\tilde{b}_t(\bar{X}_t^{i}, y),e_k\>\nu_t(\d y)\right|^2\right\}\d t.
\end{align*}
For simplicity, we denote
\begin{align}\label{gasim}
\Gamma_t(i,k,\vv,N)=\E^0\exp\left\{\varepsilon N\left|\frac{1}{N}\sum_{m=1}^N\<\tilde{b}_t(\bar{X}_t^{i}, \bar{X}_t^{m}),e_k\>-\int_{\R^d}\<\tilde{b}_t(\bar{X}_t^{i}, y),e_k\>\nu_t(\d y)\right|^2\right\}.
\end{align}
Next, we estimate $I_{12}(i)$. Recall that $\tilde{b}^\mu$ is defined in \eqref{bmu}. Consider the distribution independent SDE
\begin{align*}
\d Y_t^y=\bar{b}_t(Y_t^y)\d t+\tilde{b}_t^{\mu}(Y_t^y)\d t+\sigma_t(Y_t^y)\d W_t,\ \ Y_0^y=y\in\R^d.
\end{align*}
Set
$$P_t^\mu f(y)=\E f(Y_t^y),\ \ f\in\scr B_b(\R^d), t\in[0,T], y\in\R^d.$$
Applying \cite[Theorem 4.1]{ZY} for $b_0=\tilde{b}^\mu$, $b_1=\bar{b}$ and Pinsker's inequality, there exists a constant $c>0$ independent of $\mu_t$ such that
\begin{align*}|P_t^\mu f(y)-P_t^\mu f(\bar{y})|\leq c\frac{\|f\|_\infty}{\sqrt{t}}|y-\bar{y}|,\ \ f\in\scr B_b(\R^d),t\in(0,T],y,\bar{y}\in\R^d.
\end{align*}
This implies that
\begin{align*}
\nonumber&|P_t^\mu f(y)-P_t^\mu f(\bar{y})|\\
&\leq (2\|f\|_\infty)^{1-\eta}|P_t^\mu f(y)-P_t^\mu f(\bar{y})|^\eta\\
\nonumber&\leq 2^{1-\eta}c^\eta\|f\|_\infty t^{-\frac{\eta}{2}}|y-\bar{y}|^\eta,\ \ f\in\scr B_b(\R^d), t\in(0,T], y,\bar{y}\in\R^d, \eta\in(0,1).
\end{align*}
Combining this with \eqref{class}-\eqref{class12} and the fact
$$\mu_t(f) =\int_{\R^d}P_t^\mu f(y)\L_{X_0^i}(\d y),\ \ \nu_t(f) =\int_{\R^d}P_t^\mu f(y)\L_{X_0^{i,N}}(\d y), \ \ f\in\scr B_b(\R^d), 1\leq i\leq N,$$
we derive
\begin{align*}\|\mu_t-\nu_t\|_{var}\leq 2^{1-\eta}c^\eta t^{-\frac{\eta}{2}}\W_{\eta}(\L_{X_0^{i,N}},\L_{X_0^i}), \ \ t\in(0,T], 1\leq i\leq N, \eta\in(0,1).
\end{align*}
This together with the fact that $\tilde{b}$ is bounded implies
\begin{align}\label{CES} I_{12}(i)
\nonumber&\leq \delta \int_0^{t_0}\|\tilde{b}\|_\infty^2\|\nu_t-\mu_t\|_{var}^2\d t\\
&\leq \delta \|\tilde{b}\|_\infty^24^{1-\eta}c^{2\eta}\W_{\eta}(\L_{X_0^{i,N}},\L_{X_0^i})^2 \frac{t_0^{1-\eta}}{1-\eta},\ \ \eta\in(0,1).
\end{align}
Combining  \eqref{DDT}-\eqref{gasim}, \eqref{CES} with the equivalence between \eqref{WHloa} and \eqref{entco}, we derive
\begin{align*}&\mathrm{Ent}(\L_{(X_{t_0}^{i,N})_{1\leq i\leq N}|\P^0}|\L_{(\bar{X}_{t_0}^i)_{1\leq i\leq N}|\P^0})\\
&\leq \frac{d\delta}{\varepsilon}\int_0^{t_0}\mathrm{Ent}(\L_{(X_t^{i,N})_{1\leq i\leq N}|\P^0}|\L_{(\bar{X}_t^i)_{1\leq i\leq N}|\P^0})\d t\\
&+\sum_{k=1}^d\frac{\delta}{\varepsilon N}\sum_{i=1}^N\int_0^{t_0}\log\Gamma_t(i,k,\vv,N)\d t+\delta N\|\tilde{b}\|_\infty^24^{1-\eta}c^{2\eta}\W_{\eta}(\L_{X_0^{i,N}},\L_{X_0^i})^2 \frac{t_0^{1-\eta}}{1-\eta}.
\end{align*}
Using Gr\"{o}nwall's inequality,
we derive
\begin{align}\label{ENE}\nonumber&\mathrm{Ent}(\L_{(X_t^{i,N})_{1\leq i\leq N}|\P^0}|\L_{(\bar{X}_t^i)_{1\leq i\leq N}|\P^0})\\
&\leq \e^{d\frac{\delta}{\varepsilon} t}\frac{\delta}{\varepsilon} \sum_{k=1}^d\frac{1}{ N}\sum_{i=1}^N\int_0^{t}\log\Gamma_s(i,k,\vv,N)\d s\\
\nonumber&+\e^{d\frac{\delta}{\varepsilon} t}\|\tilde{b}\|_\infty^2 \delta N4^{1-\eta}c^{2\eta}\frac{t^{1-\eta}}{1-\eta} \W_{\eta}(\L_{X_0^{i,N}},\L_{X_0^i})^2,\ \ \eta\in(0,1), t\in[0,T].
\end{align}
%So, we have
%\beg{align}\label{DDY} \nonumber&\E^{0} \log F(X_{t_0}^{1,N},X_{t_0}^{2,N},\cdots, X_{t_0}^{N,N})\\
%&\le \log \E^{0} [ F(\bar{X}_{t_0}^{1},\bar{X}_{t_0}^{2},\cdots,\bar{X}_{t_0}^{N})]+C+CN\E|X_0^{1,N}-X_0^1|^2, \ \ 0< F\in \B_b((\R^d)^N).\end{align}

(iii) By \cite[Lemma 2.1]{23RW}, we have
\begin{align}\label{triage}
\nonumber&\mathrm{Ent}(\L_{(X_t^{i,N})_{1\leq i\leq N}|\P^0}|\L_{(X_t^i)_{1\leq i\leq N}|\P^0})\\
&\leq p\mathrm{Ent}(\L_{(X_t^{i,N})_{1\leq i\leq N}|\P^0}|\L_{(\bar{X}_t^i)_{1\leq i\leq N}|\P^0})\\
\nonumber&+(p-1)\log\int_{(\R^{d})^N}\left(\frac{\d \L_{(\bar{X}_t^{i})_{1\leq i\leq N}|\P^0}}{\d \L_{(X_t^i)_{1\leq i\leq N}|\P^0}}\right)^{\frac{p}{p-1}}\d \L_{(X_t^i)_{1\leq i\leq N}|\P^0}.
\end{align}
This together with \eqref{kyg}, \eqref{ENE} as well as the equivalence between \eqref{WHloa} and \eqref{entco} implies that for any $\eta\in(0,1)$,
\begin{align}\label{CTK}
\nonumber&\E^{0} \log F(X_{t}^{1,N},X_{t}^{2,N},\cdots, X_{t}^{N,N})\\
\nonumber&\leq  \log \E^{0} [ F(X_{t}^{1},X_{t}^{2},\cdots,X_{t}^{N})]\\
\nonumber&+ p\e^{d\frac{\delta}{\varepsilon} t}\frac{\delta}{\varepsilon} \sum_{k=1}^d\frac{1}{ N}\sum_{i=1}^N\int_0^{t}\log\Gamma_s(i,k,\vv,N)\d s\\
&+p\e^{d\frac{\delta}{\varepsilon} t}\|\tilde{b}\|_\infty^2 \delta N4^{1-\eta}c^{2\eta}\frac{t^{1-\eta}}{1-\eta} \W_{\eta}(\L_{X_0^{i,N}},\L_{X_0^i})^2\\
\nonumber&+\frac{c(p)\sum_{i=1}^N|X_0^{i,N}-X_0^{i}|^2}{t}, \ \ 0< F\in \B_b((\R^d)^N), t\in(0,T].
\end{align}
Taking $\varepsilon= (64\e^2\|\tilde{b}\|_{\infty}^2)^{-1}$ and applying Jensen's inequality and Lemma \ref{EUP} for  $\phi= \<\tilde{b},e_k\>\sqrt{\varepsilon}$, we derive from \eqref{gasim} that
\begin{align}\label{CES01}
\nonumber &\E\sum_{k=1}^d\int_0^{t}\log\Gamma_s(i,k,\vv,N)\d s\\
 &\leq \sum_{k=1}^d\int_0^{t}\log\E\exp\left\{\varepsilon N\left|\frac{1}{N}\sum_{m=1}^N\<\tilde{b}_s(\bar{X}_s^{i}, \bar{X}_s^{m}),e_k\>-\int_{\R^d}\<\tilde{b}_s(\bar{X}_s^{i}, y),e_k\>\nu_s(\d y)\right|^2\right\}\d s\\
\nonumber&\leq dt\log3,
\end{align}
here in the last step, we used the fact that $(\bar{X}_t^{i})_{1\leq i\leq N}$ are i.i.d. under $\P$
since so are $(X_0^{i,N})_{1\leq i\leq N}$. Taking expectation on both sides of \eqref{CTK}, noting $\varepsilon^{-1}=64\e^2\|\tilde{b}\|_{\infty}^2$ and using Jensen's inequality and \eqref{CES01}, we conclude
\begin{align}\label{ENP}
\nonumber&\E \log F(X_{t}^{1,N},X_{t}^{2,N},\cdots, X_{t}^{N,N})\\
&\leq  \log \E [ F(X_{t}^{1},X_{t}^{2},\cdots,X_{t}^{N})]\\
\nonumber&+ p\e^{64\e^2\|\tilde{b}\|_{\infty}^2d\delta t}\|\tilde{b}\|_\infty^2\left(64\e^2d\delta t\log 3+ \delta N4^{1-\eta}c^{2\eta}\frac{t^{1-\eta}}{1-\eta} \W_{\eta}(\L_{X_0^{i,N}},\L_{X_0^i})^2\right)\\
\nonumber&+\frac{c(p)\sum_{i=1}^N\E|X_0^{i,N}-X_0^{i}|^2}{t}, \ \ 0< F\in \B_b((\R^d)^N), \eta\in(0,1).
\end{align}
For any $0< f\in \B_b((\R^d)^k)$, take $$F_f(x_1,x_2,\cdots,x_{\lfloor\frac{N}{k}\rfloor k})=\prod_{i=0}^{\lfloor \frac{N}{k}\rfloor-1}f(x_{ik+1},x_{ik+2},\cdots,x_{ik+k}), \ \ (x_1,x_2,\cdots,x_N)\in (\R^d)^N.$$
Since $(X_{t}^{1},X_{t}^{2},\cdots,X_{t}^{N})$ are i.i.d. under $\P$ and $\lfloor \frac{N}{k}\rfloor^{-1}\leq \frac{2k}{N},1\leq k\leq N$, \eqref{ENP} for $F=F_f$ implies
\beg{align}\label{GTY}\nonumber &\E \log f(X_{t}^{1,N},X_{t}^{2,N},\cdots, X_{t}^{k,N})\\
&\le \log \E [ f(X_{t}^{1},X_{t}^{2},\cdots,X_{t}^{k})]\\
\nonumber&+ \frac{2k}{N}p\e^{64\e^2\|\tilde{b}\|_{\infty}^2d\delta t}\|\tilde{b}\|_\infty^2\left(64\e^2d\delta t\log 3+ \delta N4^{1-\eta}c^{2\eta}\frac{t^{1-\eta}}{1-\eta} \W_{\eta}(\L_{X_0^{i,N}},\L_{X_0^i})^2\right)\\
\nonumber&+2\frac{k}{N}\frac{c(p)\W_{2}(\L_{(X_0^{i,N})_{1\leq i\leq N}},\L_{(X_0^{i})_{1\leq i\leq N}})^2}{t},\ \ 0< f\in \B_b((\R^d)^k),
\end{align}
here the term $\W_2$ is due to the fact that \eqref{ENP} holds if we replace  $((X_0^{i,N})_{1\leq i\leq N}, (X_0^{i})_{1\leq i\leq N})$ on the right hand side by $((\tilde{X}_0^{i,N})_{1\leq i\leq N},(\tilde{X}_0^{i})_{1\leq i\leq N})$ satisfying $\L_{(\tilde{X}_0^{i,N})_{1\leq i\leq N}}=\L_{(X_0^{i,N})_{1\leq i\leq N}}$ and $\L_{(\tilde{X}_0^{i})_{1\leq i\leq N}}=\L_{(X_0^{i})_{1\leq i\leq N}}$.
Finally, since both $(X_0^{i,N})_{1\leq i\leq N}$ and $(X_0^{i})_{1\leq i\leq N}$ are i.i.d., Lemma \ref{transport} for $E=\R^d, m=N, \Psi^i(x,y)=|x-y|^2,x,y\in\R^d$ implies
\begin{align*}\W_{2}(\L_{(X_0^{i,N})_{1\leq i\leq N})},\L_{(X_0^{i})_{1\leq i\leq N})})^2=\sum_{i=1}^N\W_{2}(\L_{X_0^{i,N}},\L_{X_0^{i}})^2=N\W_{2}(\L_{X_0^{1,N}},\L_{X_0^{1}})^2.
\end{align*}
This together with \eqref{GTY}, the equivalence between \eqref{WHloa} and \eqref{entco} and Pinsker's inequality implies \eqref{VEN}.
%(2) Observe that
%\begin{align*}&\E\left|\frac{1}{N}\sum_{j=1}^N b_t(X_t^1-X_t^j)-\int_{\R^d}b_t(X_t^1-y)\L_{X_t^1|\F_T^B}(\d y)\right|^2\\
%&= \E\left\{\E^B\left|\frac{1}{N}\sum_{j=1}^N b_t(X_t^1-X_t^j)-\int_{\R^d}b_t(X_t^1-y)\L_{X_t^1|\F_T^B}(\d y)\right|^2\right\}\\
%&\leq \E\left\{\E^B\frac{1}{N^2}\sum_{j=1}^N\sum_{k=1}^N  b_t(X_t^1-X_t^j)b_t(X_t^1-X_t^k)\right\}\\
%& -2\E\left\{\E^B\frac{1}{N}\sum_{j=1}^N b_t(X_t^1-X_t^j)\int_{\R^d}b_t(X_t^1-y)\L_{X_t^1|\F_T^B}(\d y)\right\}\\
%&+\E\left\{\E^B\left(\int_{\R^d}b_t(X_t^1-y)\L_{X_t^1|\F_T^B}(\d y)\right)^2\right\}\\
%&\leq C_b\frac{1}{N}.
%\end{align*}
%Repeating the above procedure, we
\end{proof}
\begin{rem}\label{Compa} Combining \eqref{DDT} with \eqref{kyg}, \eqref{triage}, one may derive
\begin{align*}
\nonumber&\mathrm{Ent}(\L_{(X_t^{i,N})_{1\leq i\leq N}}|\L_{(X_t^i)_{1\leq i\leq N}})\\
&\leq \frac{\delta}{2}p \sum_{i=1}^N\int_0^{t}\E\left|\frac{1}{N}\sum_{m=1}^N\tilde{b}_s(X_s^{i,N}, X_s^{m,N})-\int_{\R^d}\tilde{b}_s(X_s^{i,N}, y)\mu_s(\d y)\right|^2\d t\\
\nonumber&+\frac{c(p)\E\sum_{i=1}^N|X_0^{i,N}-X_0^{i}|^2}{t}, \ \ t\in(0,T].
\end{align*}
For the first term on the right hand side, if we apply Young's inequality as in \cite{JW1}, we get
\begin{align*}
&\E\left|\frac{1}{N}\sum_{m=1}^N\tilde{b}_t(X_t^{i,N}, X_t^{m,N})-\int_{\R^d}\tilde{b}_t(X_t^{i,N}, y)\mu_t(\d y)\right|^2\\
&\leq \mathrm{Ent}(\L_{(X_t^{i,N})_{1\leq i\leq N}}|\L_{(X_t^i)_{1\leq i\leq N}})+\log\E\e^{\left|\frac{1}{N}\sum_{m=1}^N\tilde{b}_t(X_t^{i}, X_t^{m})-\int_{\R^d}\tilde{b}_t(X_t^{i}, y)\mu_t(\d y)\right|^2}.
\end{align*}
 Combining this with \eqref{CTK}, one may expect to apply Gronwall's inequality. However, in the present case, this may made $\mathrm{Ent}(\L_{(X_t^{i,N})_{1\leq i\leq N}}|\L_{(X_t^i)_{1\leq i\leq N}})$ blow up due to the singular term $\frac{c(p)\E\sum_{i=1}^N|X_0^{i,N}-X_0^{i}|^2}{t}$ ($\int_0^r\frac{1}{t}\d t=\infty$).
In \cite{JW1}, this does not happen since $\frac{c(p)\E\sum_{i=1}^N|X_0^{i,N}-X_0^{i}|^2}{t}$ is replaced by $\mathrm{Ent}(\mu_0^N|\mu_0^{\otimes N})$. This is the reason why we adopt a different technique to complete the proof of Theorem \ref{POC}.
\end{rem}
\section{Proof of Theorem \ref{POC13}}
\subsection{Proof of Theorem \ref{POC13}(i)}
Before proving Theorem \ref{POC13}, we first present a lemma on law of large number in $L^2$.
 \begin{lem}\label{EUP13} Let $\mathbb{B}$ be a Banach space. Assume that $\{\xi_i\}_{i\geq 1}$ are i.i.d. $\mathbb{B}$-valued random variables. Then for any bounded measurable function $\phi:\mathbb{B}\times\mathbb{B}\to\R$, there exists a constant $c>0$ depending on $\|\phi\|_\infty$ such that
$$\E\left\{\left|\frac{1}{N}\sum_{i=1}^N\phi(\xi_1,\xi_i)-\int_{\mathbb{B}}\phi(\xi_1,y)\L_{\xi_1}(\d y)\right|^2\right\}\leq \frac{c}{N}.$$
\end{lem}
The proof of Lemma \ref{EUP13} is trivial since $\phi$ is bounded so that we omit it.

To complete the proof of Theorem \ref{POC13}(i), for any $s\geq 0$, consider the decoupled SDE
\begin{align}\label{deSDE}\d X_{s,t}^{i,\mu,z}&=\bar{b}_t(X_{s,t}^{i,\mu,z})\d t+\int_{\R^d}\tilde{b}_t(X_{s,t}^{i,\mu,z},y)\mu_t(\d y)\d t+\sigma(X_{s,t}^{i,\mu,z})\d W_t^i, \ \ t\geq s
\end{align}
with $X_{s,s}^{i,\mu,z}=z\in\R^d$.
%It is easy to see that
%$$\mu_t=\int_{\R^d}\L_{X_{0,t}^{\mu,z}}\mu_0(\d z).$$
Let
$$P_{s,t}^{i,\mu} f(z):=\E f(X_{s,t}^{i,\mu,z}), \ \ f\in \scr B_b(\R^{d}),z\in\R^d,i\geq 1,0\leq s\leq t.$$

Let $ C_b^2(\R^d)$ be the set of all continuous functions on $\R^d$ with bounded and continuous up to second order derivatives.
For any $F\in C^1((\R^d)^k)$, $1\leq i\leq k$, $x=(x^1,x^2,\cdots,x^k)\in(\R^d)^k$, let $\nabla_i F(x)$ denote the gradient with respect to $x^i$.
Since \eqref{deSDE} is well-posed so that $P_{s,t}^{i,\mu}$ does not depend on $i$ and we denote $P_{s,t}^{\mu}=P_{s,t}^{i,\mu},\ \ i\geq 1$.
For any $k\geq 1$, $x=(x^1,x^2,\cdots,x^k)\in (\R^{d})^k, F\in \scr B_b((\R^{d})^k)$ and $s\in[0,t]$, define
\begin{align*}(P_{s,t}^\mu)^{\otimes k} F(x):=\E F(X_{s,t}^{1,\mu,x^1},X_{s,t}^{2,\mu,x^2},\cdots,X_{s,t}^{k,\mu,x^k}),\ \ 0\leq s\leq t.
\end{align*}
For simplicity, we write $P_{t}^\mu =P_{0,t}^\mu $.
%Moreover, for any $1\leq k\leq N$, $\mu_0^k\in\scr P((\R^d)^k)$, we denote $$[((P_{t}^\mu)^{\otimes k})^\ast\mu_0^k](A)=\int_{(\R^d)^k}\left((P_{t}^\mu)^{\otimes k} 1_{A}\right)(x)\mu_0^k(\d x), \ \ A\in\scr B((\R^d)^k).$$
%For simplicity, denote $(P_{t}^\mu)^\ast=((P_{t}^\mu)^{\otimes 1})^\ast$.
\begin{proof}[Proof of Theorem \ref{POC13}(i)]
For any $1\leq i\leq N$, $s\in[0,T]$, let
  \begin{align}\label{Bi1}B^i_s(x)=\frac{1}{N}\sum_{m=1}^N\tilde{b}_s(x^i,x^m) -\int_{\R^d}\tilde{b}_s(x^i,y)\mu_s(\d y),\ \ x=(x^1,x^2,\cdots,x^N)\in(\R^d)^N.
  \end{align}
    Then it follows from {\bf(A2')} that
\begin{align}\label{cti}
\sup_{x\neq y}\frac{|\sum_{i=1}^N|B_s^i(x)|-\sum_{i=1}^N|B_s^i(y)||}{\rho_{\ell_\alpha}(x,y)}\leq 3K_b.
\end{align}
Let $\beta\in(0,1]$ and $1\leq k\leq N$.
Let $\pi_k$ be the projection mapping defined by
$$\pi_k(x)=(x^1,x^2,\cdots,x^k),\ \ x=(x^1,x^2,\cdots,x^N)\in(\R^d)^N.$$
For any $F\in C_b^2((\R^d)^k)$ and $t\geq 0$, it holds
\begin{align}\label{DUH0}
\nonumber&\int_{(\R^d)^k}F(x)\{(P_t^{[k],N})^\ast\mu^N_0\}(\d x)-\int_{(\R^d)^k}\{(P_{t}^\mu)^{\otimes k} F\}(x)(\mu^N_0\circ\pi_k^{-1})(\d x)\\
&=\int_0^t\sum_{i=1}^k\int_{(\R^d)^N}\bigg\<B^i_s(x),[\nabla_{i}(P^\mu_{s,t})^{\otimes k}F](\pi_kx)\bigg\>\{(P_s^{[N],N})^\ast\mu^N_0\}(\d x)\d s
    \end{align}
Let $\tilde{b}_s^\mu=\int_{\R^d}\tilde{b}_s(\cdot,y)\mu_s(\d y)$. By {\bf (A1)}-{\bf (A2)}, applying \cite[Theorem 4.1]{ZY} for $b_0=\tilde{b}^\mu$, $b_1=\bar{b}$ and Pinsker's inequality, there exists a constant $c_0>0$ independent of $\mu_t$ such that
\begin{align*}
|\nabla P_{s,t}^\mu f|\leq \left(c_0((t-s)\wedge 1)^{-\frac{1-\beta}{2}}\|f\|_{[\beta]}\right)\wedge \left(c_0((t-s)\wedge 1)^{-\frac{1}{2}}\|f\|_{\infty}\right),
\end{align*}
which implies that for any $1\leq i\leq k$,
\begin{align}\label{cmy}
[\nabla_{i}(P^\mu_{s,t})^{\otimes k}F]\leq \left(c_0((t-s)\wedge 1)^{-\frac{1-\beta}{2}}\|F\|_{[\ell_\beta]}\right)\wedge \left(c_0((t-s)\wedge 1)^{-\frac{1}{2}}\|F\|_{\infty}\right),
\end{align}
here
 $[F]_{[\ell_\beta]}:=\sup_{x\neq y}\frac{|F(x)-F(y)|}{\rho_{\ell_\beta}(x,y)}$.
Combining \eqref{cmy} with \eqref{DUH0} and the fact that $(X_s^{i,N})_{1\leq i\leq N}$ and $(X_s^{i})_{1\leq i\leq N}$ are exchangeable, we conclude that for $[F]_{[\ell_\beta]}\leq 1$,
\begin{align}\label{DUH01}
\nonumber&\int_{(\R^d)^k}F(x)\{(P_t^{[k],N})^\ast\mu^N_0\}(\d x)-\int_{(\R^d)^k}\{(P_{t}^\mu)^{\otimes k} F\}(x)(\mu^N_0\circ\pi_k^{-1})(\d x)\\
\nonumber&\leq c_0\int_0^t((t-s)\wedge 1)^{-\frac{1-\beta}{2}}\sum_{i=1}^k\int_{(\R^d)^N}|B^i_s(x)|\{(P_s^{N})^\ast\mu^N_0\}(\d x)\d s\\
& = c_0\int_0^t((t-s)\wedge 1)^{-\frac{1-\beta}{2}}\frac{k}{N}\sum_{i=1}^N\int_{(\R^d)^N}|B^i_s(x)|\{(P_s^{N})^\ast\mu^N_0-(P_s^\ast\mu_0)^{\otimes N}\}(\d x)\d s\\
&+\nonumber c_0\int_0^t((t-s)\wedge 1)^{-\frac{1-\beta}{2}}k\int_{(\R^d)^N}|B^i_s(x)|\{(P_s^\ast\mu_0)^{\otimes N}\}(\d x)\d s.
\end{align}
By \eqref{cti}, we have 
\begin{align*}
&\int_{(\R^d)^N}\sum_{i=1}^N|B^i_s(x)|\{(P_s^{N})^\ast\mu^N_0-(P_s^\ast\mu_0)^{\otimes N}\}(\d x) \leq 3K_b\W_{\ell_{\alpha}}(P_s^{[N],N})^\ast\mu^N_0,(P_s^\ast\mu_0)^{\otimes N}).
\end{align*}
This combined with \eqref{DUH01} and Lemma \ref{EUP13} implies for $[F]_{[\ell_\beta]}\leq 1$,
\begin{align}\label{DUH03}
\nonumber&\int_{(\R^d)^k}F(x)\{(P_t^{[k],N})^\ast\mu^N_0\}(\d x)-\int_{(\R^d)^k}\{(P_{t}^\mu)^{\otimes k} F\}(x)(\mu^N_0\circ\pi_k^{-1})(\d x)\\
&\leq c_1\int_0^t((t-s)\wedge 1)^{-\frac{1-\beta}{2}}\frac{k}{N}\W_{\ell_{\alpha}}(P_s^{[N],N})^\ast\mu^N_0,(P_s^\ast\mu_0)^{\otimes N})\d s\\
\nonumber&+\frac{c_1k}{\sqrt{N}}\int_0^t((t-s)\wedge 1)^{-\frac{1-\beta}{2}}\d s.
\end{align}
Next, for any $\tilde{\pi}\in \C(\mu^N_0\circ\pi_k^{-1},\mu_0^{\otimes k})$, it is easy to see that 
\begin{align*}&\left|\int_{(\R^d)^k}\{(P_{t}^\mu)^{\otimes k} F\}(x)(\mu^N_0\circ\pi_k^{-1})(\d x)-\int_{(\R^d)^k}\{(P^\mu_{t})^{\otimes k}F\}(x)\mu_0^{\otimes k}(\d x)\right|\\
&\leq \int_{(\R^d)^k\times(\R^d)^k}|\{(P^\mu_{t})^{\otimes k}F\}(x)-\{(P^\mu_{t})^{\otimes k}F\}(y)|\tilde{\pi}(\d x,\d y).
\end{align*}
This together with \eqref{cmy} implies 
\begin{align}\label{tyl}\nonumber&\left|\int_{(\R^d)^k}\{(P_{t}^\mu)^{\otimes k} F\}(x)(\mu^N_0\circ\pi_k^{-1})(\d x)-\int_{(\R^d)^k}\{(P^\mu_{t})^{\otimes k}F\}(x)\mu_0^{\otimes k}(\d x)\right|\\
&\leq \left[\left(c_0(t\wedge 1)^{-\frac{1-\beta}{2}}\|F\|_{[\ell_\beta]}\right)\wedge \left(c_0(t\wedge 1)^{-\frac{1}{2}}\|F\|_{\infty}\right)\right]\W_{\ell_1}(\mu^N_0\circ\pi_k^{-1},\mu_0^{\otimes k}).
\end{align}
Combining \eqref{tyl} with \eqref{DUH03} for $k=N$ and the triangle inequality, we get
\begin{align}\label{cmy13}
\nonumber&\W_{\ell_\beta}((P_t^{[N],N})^\ast\mu^N_0,(P_t^\ast\mu_0)^{\otimes N})\\
&\leq c_0(t\wedge 1)^{-\frac{1-\beta}{2}}\W_{\ell_1}(\mu^N_0,\mu_0^{\otimes N})+ \frac{c_1N}{\sqrt{N}}\int_0^t((t-s)\wedge 1)^{-\frac{1-\beta}{2}}\d s\\
\nonumber&+c_1\int_0^t((t-s)\wedge 1)^{-\frac{1-\beta}{2}}\W_{\ell_{\alpha}}(P_s^{[N],N})^\ast\mu^N_0,(P_s^\ast\mu_0)^{\otimes N})\d s.
\end{align}
For any $\lambda>0$ and $t_0\in(0,T]$, define
$$\Gamma_{t_0}(\lambda)=\sup_{t\in[0,t_0]}\e^{-\lambda t}(t\wedge 1)^{\frac{1-\alpha}{2}}\W_{\ell_\alpha}((P_t^{[N],N})^\ast\mu^N_0,(P_t^\ast\mu_0)^{\otimes N}).$$
It follows from \eqref{cmy13} for $\beta=\alpha$ that
\begin{align}\label{gat}
\nonumber\Gamma_{t_0}(\lambda)&\leq c_0\W_{\ell_1}(\mu^N_0,\mu_0^{\otimes N})+ \frac{2c_1N}{\sqrt{N}}(t_0\vee t_0^{\frac{1+\alpha}{2}})(t_0\wedge 1)^{\frac{1-\alpha}{2}}\\
&+\Gamma_{t_0}(\lambda)c_1\sup_{t\in[0,t_0]}(t\wedge 1)^{\frac{1-\alpha}{2}}\int_0^t\e^{-\lambda (t-s)}((t-s)\wedge 1)^{-\frac{1-\alpha}{2}}(s\wedge 1)^{-\frac{1-\alpha}{2}}\d s.
\end{align}
 Note that there exists a constant $\lambda_0>0$ such that $$c_1\sup_{t\in[0,T]}(t\wedge 1)^{\frac{1-\alpha}{2}}\int_0^t\e^{-\lambda_0 (t-s)}((t-s)\wedge 1)^{-\frac{1-\alpha}{2}}(s\wedge 1)^{-\frac{1-\alpha}{2}}\d s\leq \frac{1}{2}.$$
So, we derive from \eqref{gat} that
$$\Gamma_{t_0}(\lambda_0)\leq 2c_0\W_{\ell_1}(\mu^N_0,\mu_0^{\otimes N})+ \frac{4c_1N}{\sqrt{N}}t_0,$$
which implies 
\begin{align}\label{Walph}
\nonumber&\W_{\ell_\alpha}((P_t^{[N],N})^\ast\mu^N_0,(P_t^\ast\mu_0)^{\otimes N})\\
&\leq \e^{\lambda _0t}2c_0(t\wedge 1)^{-\frac{1-\alpha}{2}}\W_{\ell_1}(\mu^N_0,\mu_0^{\otimes N})+ \e^{\lambda _0t}(t\vee t^{\frac{1+\alpha}{2}})\frac{4c_1N}{\sqrt{N}}, \ \ t\in[0,T].
\end{align}
Substituting \eqref{Walph} into \eqref{cmy13}, we obtain
\begin{align*}
&\W_{\ell_\beta}((P_t^{[N],N})^\ast\mu^N_0,(P_t^\ast\mu_0)^{\otimes N})\leq c(t\wedge 1)^{-\frac{1-\beta}{2}}\W_{\ell_1}(\mu^N_0,\mu_0^{\otimes N})+ \frac{cN}{\sqrt{N}},\ \ t\in[0,T].
\end{align*}
This implies \eqref{nny} due to 
$$\W_{\ell_\beta}((P_t^{[k],N})^\ast\mu^N_0,(P_t^\ast\mu_0)^{\otimes k})\leq \frac{k}{N}\W_{\ell_\beta}((P_t^{[N],N})^\ast\mu^N_0,(P_t^\ast\mu_0)^{\otimes N}), \ \ 1\leq k\leq N.$$

Finally, by \eqref{tyl} and \eqref{DUH03} for $\beta=0$ and using $\W_{\ell_1}(\mu^N_0\circ\pi_k^{-1},\mu_0^{\otimes k})\leq \frac{k}{N}\W_{\ell_1}(\mu^N_0,\mu_0^{\otimes N})$, we arrive at
\begin{align*}
&\|(P_t^{[k],N})^\ast\mu^N_0-(P_t^\ast\mu_0)^{\otimes k}\|_{var}\\
&\leq c_1\int_0^t((t-s)\wedge 1)^{-\frac{1}{2}}\frac{k}{N}\W_{\ell_{\alpha}}(P_s^{[N],N})^\ast\mu^N_0,(P_s^\ast\mu_0)^{\otimes N})\d s\\
&+\frac{c_1k}{\sqrt{N}}\int_0^t((t-s)\wedge 1)^{-\frac{1}{2}}\d s+c_0((t-s)\wedge 1)^{-\frac{1}{2}}\frac{k}{N}\W_{\ell_1}(\mu^N_0,\mu_0^{\otimes N}),
\end{align*}
This combined with \eqref{Walph} implies \eqref{nny13} and the proof is completed.
\end{proof}
\subsection{Proof of Theorem \ref{POC13}(ii)}

\begin{proof}[Proof of Theorem \ref{POC13}(ii)]
Combining \eqref{DDT} with \eqref{kyg}, \eqref{triage} and noting that $$\mathrm{Ent}(\L_{(X_t^{i,N})_{1\leq i\leq N}}|\L_{(X_t^i)_{1\leq i\leq N}})\leq \E\mathrm{Ent}(\L_{(X_t^{i,N})_{1\leq i\leq N}|\P^0}|\L_{(X_t^i)_{1\leq i\leq N}|\P^0}),$$
we derive 
\begin{align}\label{CTK13}
\nonumber&\mathrm{Ent}(\L_{(X_t^{i,N})_{1\leq i\leq N}}|\L_{(X_t^i)_{1\leq i\leq N}})\\
&\leq \frac{\delta}{2}p \sum_{i=1}^N\int_0^{t}\E\left|\frac{1}{N}\sum_{m=1}^N\tilde{b}_s(X_s^{i,N}, X_s^{m,N})-\int_{\R^d}\tilde{b}_s(X_s^{i,N}, y)\mu_s(\d y)\right|^2\d s\\
\nonumber&+\frac{c(p)\E\sum_{i=1}^N|X_0^{i,N}-X_0^{i}|^2}{t}, \ \ t\in(0,T].
\end{align}
Recall \eqref{Bi1}. By {\bf(A2')}, we conclude
\begin{align}\label{bb2}
\nonumber&\sum_{i=1}^N|B^i_s(x)-B^i_s(y)|^2\\
\nonumber&\leq \sum_{i=1}^N\left(2K_b|x^i-y^i|^\alpha+K_b\frac{1}{N}\sum_{m=1}^N|x^m-y^m|^\alpha\right)^2\\
&\leq 8K_b^2\sum_{i=1}^N|x^i-y^i|^{2\alpha}+2K_b^2\sum_{i=1}^N\frac{1}{N}\sum_{m= 1}^N|x^m-y^m|^{2\alpha}\\
\nonumber&=10K_b^2\sum_{i=1}^N|x^i-y^i|^{2\alpha},\ \ x=(x^1,x^2,\cdots,x^N), y=(y^1,y^2,\cdots,y^N)\in(\R^d)^N.
\end{align}
Noting that for any $\pi\in \scr C((P_s^{[N],N})^\ast\mu^N_0,(P_s^\ast\mu_0)^{\otimes N})$, it holds
\begin{align*}
&\E\sum_{i=1}^N\left|\frac{1}{N}\sum_{m=1}^N\tilde{b}_s(X_s^{i,N}, X_s^{m,N})-\int_{\R^d}\tilde{b}_s(X_s^{i,N}, y)\mu_s(\d y)\right|^2\\
&=\int_{(\R^d)^N\times (\R^d)^N}\sum_{i=1}^N|B_s^i(x)|^2\pi(\d x,\d y)\\
& \leq 2\int_{(\R^d)^N\times (\R^d)^N}\sum_{i=1}^N|B_s^i(x)-B^i_s(y)|^2\pi(\d x,\d y)+2\sum_{i=1}^N\int_{(\R^d)^N}|B^i_s(y)|^2(P_s^\ast\mu_0)^{\otimes N}(\d y)\\
&=20K_b^2\int_{(\R^d)^N\times (\R^d)^N}\sum_{i=1}^N|x^i-y^i|^{2\alpha}\pi(\d x,\d y)+c(T),
\end{align*}
where in the last step, we used \eqref{bb2} and $
\int_{(\R^d)^N}|B^i_s(y)|^2(P_s^\ast\mu_0)^{\otimes N}(\d y)\leq \frac{c_0(T)}{N}$ due to Lemma \ref{EUP13}.
This immediately gives 
\begin{align*}
&\E\sum_{i=1}^N\left|\frac{1}{N}\sum_{m=1}^N\tilde{b}_s(X_s^{i,N}, X_s^{m,N})-\int_{\R^d}\tilde{b}_s(X_s^{i,N}, y)\mu_s(\d y)\right|^2\\
&\leq 20K_b^2\W_{\ell_{2\alpha}}((P_s^{[N],N})^\ast\mu^N_0,(P_s^\ast\mu_0)^{\otimes N})+c(T).
\end{align*}
Combining this with \eqref{nny} for $\beta=2\alpha(\alpha\in(0,\frac{1}{2}])$ and \eqref{CTK13}, we get
\beg{align*}&\E \log F(X_{t}^{1,N},X_{t}^{2,N},\cdots, X_{t}^{N,N})\\
&\le \log \E [ F(X_{t}^{1},X_{t}^{2},\cdots,X_{t}^{N})]+ \frac{c(p)\E\sum_{i=1}^N|X_0^{i,N}-X_0^{i}|^2}{t}\\
&+ct^{\frac{1+2\alpha}{2}}\W_{\ell_1}(\mu^N_0,\mu_0^{\otimes N})+ \frac{cN}{\sqrt{N}}+c(T),\ \ 0< F\in \B_b((\R^d)^N).
\end{align*}
Repeating the argument to derive \eqref{GTY} from \eqref{ENP}, the equivalence between \eqref{WHloa} and \eqref{entco}, \eqref{VENtk} holds and the proof is completed.
%(2) Observe that
%\begin{align*}&\E\left|\frac{1}{N}\sum_{j=1}^N b_t(X_t^1-X_t^j)-\int_{\R^d}b_t(X_t^1-y)\L_{X_t^1|\F_T^B}(\d y)\right|^2\\
%&= \E\left\{\E^B\left|\frac{1}{N}\sum_{j=1}^N b_t(X_t^1-X_t^j)-\int_{\R^d}b_t(X_t^1-y)\L_{X_t^1|\F_T^B}(\d y)\right|^2\right\}\\
%&\leq \E\left\{\E^B\frac{1}{N^2}\sum_{j=1}^N\sum_{k=1}^N  b_t(X_t^1-X_t^j)b_t(X_t^1-X_t^k)\right\}\\
%& -2\E\left\{\E^B\frac{1}{N}\sum_{j=1}^N b_t(X_t^1-X_t^j)\int_{\R^d}b_t(X_t^1-y)\L_{X_t^1|\F_T^B}(\d y)\right\}\\
%&+\E\left\{\E^B\left(\int_{\R^d}b_t(X_t^1-y)\L_{X_t^1|\F_T^B}(\d y)\right)^2\right\}\\
%&\leq C_b\frac{1}{N}.
%\end{align*}
%Repeating the above procedure, we
\end{proof}

%\subsection{Entropy and total variation distance}
%\begin{align}\label{IIT}\d \hat{X}_t^i= b_t(\hat{X}_t^i+\eta_t, \mu_t^i)\d t+  \sigma_t(\hat{X}^i_t+\eta_t) \d W^i_t,\ \ t\in [0,T],\ \ 1\leq i\leq N
%\end{align}
%and the stochastic interacting particle system
%\begin{align}\label{ITR}\d X^{i,N}_t=b_t(X_t^{i,N}, \hat\mu_t^N)\d t+\sigma_t(X^{i,N}_t) \d W^i_t+ \tt \si_t\d  B_t,\ \ 1\leq i\leq N, X_0^{i,N}=X_0^i,
%\end{align}
%where  $\hat\mu_t^N$ is the empirical distribution of $X_t^{1,N},\cdots,X_t^{N,N}$, i.e.
%\begin{equation*}
% \hat\mu_t^N =\ff{1}{N}\sum_{j=1}^N\dd_{X_t^{j,N}}.
% \end{equation*}
%The diffusion cannot depend on the distribution variable. We need to estimate the conditional entropy given $B_t$, the coefficients before $B$ only depend on $t$. In this case, given $B$, they are independent. Then
%$$\mathrm{Ent}(\L_{X^{i,N}}|B,\L_{X^{i}}|B)\leq \frac{2}{N}\mathrm{Ent}(\L_{X^{N}}|B,\L_{X}|B)$$

%\subsection{Well-posedness of conditional McKean-Vlasov SDEs}


\begin{thebibliography}{999}
%\bibitem{AZ0} G. B. Arous, O. Zeitouni, Increasing propagation of chaos for mean field models, \emph{Ann. Inst. Henri Poincar\'e Probab. Stat.} 35(1999), 85-102.
%\bibitem{B} D. Ban\~{o}s, The Bismut每Elworthy每Li formula for mean-field stochastic differential equations, \emph{Ann. Inst. Henri Poincar\'e Probab. Stat.} 54(2018),  220每233.

%\bibitem{BRW21} J. Bao, P. Ren, F.-Y. Wang, Bismut formula for Lions derivative of distribution-path dependent SDEs, \emph{J. Differential Equations} 282(2021) 285-329.
%\bibitem{BSW} J. Bao, J. Shao, D. Wei, Wellposedness of conditional McKean-Vlasov equations with singular drifts and regime-switching, \emph{Discrete Contin. Dyn. Syst. Ser. B} 28(2023), 2911-2926.
%\bibitem{RWBR18} V. Barbu, M. R\"ockner, Probabilistic representation for solutions to nonlinear {F}okker-{P}lanck equations, \emph{SIAM J. Math. Anal.} 50(2018), 4246--4260.

% \bibitem{RWBR} V. Barbu, M. R\"ockner, From nonlinear Fokker-Planck equations to solutions of distribution dependent SDE,  \emph{Ann. Probab.} 48(2020), 1902每1920.
% \bibitem{BRR}  V. Barbu,  M. R\"{o}ckner, F. Russo, Doubly probabilistic representation for the stochastic porous
%media type equation, \emph{Ann. Inst. Henri Poincar\'e Probab. Stat.} 53(2017), 2043每2073.
%\bibitem{BB} M. Bauer, T. M-Brandis, Existence and Regularity of Solutions to Multi-Dimensional Mean-Field Stochastic Differential Equations with Irregular Drift,	\emph{arXiv:1912.05932}.
%\bibitem{BBP} M. Bauer, T. M-Brandis, F. Proske, Strong Solutions of Mean-Field Stochastic Differential Equations with irregular drift, \emph{arXiv:1806.11451}.
%\bibitem{BFY}  A. Bensoussan, J. Frehse, P. Yam, Mean field games and mean field type control theory, \emph{Springer Briefs in Mathematics.} Springer, New York, 2013.
%\bibitem{BO} R. J. Berman, M. \"{O}nnheim, Propagation of Chaos for a Class of First Order Models with Singular Mean Field Interactions, \emph{SIAM J. Math. Anal.} 51(2019), 159-196.
\bibitem{BJW} D. Bresch, P.-E. Jabin, Z. Wang, Mean-field limit and quantitative estimates with singular attractive kernels, \emph{Duke Math. J.} 172(2023), 2591-2641.
%\bibitem{CLY} X. Chen, X. Li, C. Yuan, The delay feedback control for the McKean-Vlasov stochastic differential equations with common noise, \emph{arXiv:2311.11703}.

\bibitem{CFGZW} J. A. Carrillo, X. Feng, S. Guo, P.-E. Jabin, Z. Wang, Relative entropy method for particle approximation of the Landau equation for Maxwellian molecules, \emph{J. Math. Pures Appl. } 206(2026), Paper No. 103838.
    \bibitem{CLRW} F. Chen, Y. Lin, Z. Ren, S. Wang, Uniform-in-time propagation of chaos for kinetic mean field Langevin dynamics, \emph{Electron. J. Probab.} 29(2024), Paper No. 17, 43 pp.
\bibitem{Chen} M.-F. Chen, From Markov Chains to Non-Equilibrium Particle Systems, Word Scientific, Singapore 2004 (2nd edn).
%\bibitem{Deng} C.-S. Deng, Harnack inequalities for SDEs driven by subordinate Brownian motions, \emph{J. Math. Anal. Appl.} 417(2014), 970-978.

%\bibitem{CDL}  R. Carmona, F. Delarue, D. Lacker, Mean field games with common noise, \emph{Ann. Probab.} 44(2016), 3740-3803.
%\bibitem{CR} P. E. Chaudru de Raynal, Strong well-posedness of McKean-Vlasov stochastic differential equation with H\"older drift,  \emph{Stochastic Process. Appl.}  130(2020),     79--107.
%\bibitem{CF} P. E. Chaudru de Raynal, N. Frikha, Well-posedness for some non-linear diffusion processes and related pde on the wasserstein space,  \emph{arXiv:1811.06904}.


%\bibitem{CM} D. Crisan, E. McMurray, Smoothing properties of McKean-Vlasov SDEs, \emph{Probab. Theory Relat. Fields} 171(2018), 97--148.



%\bibitem{FKM} N. Frikha, V. Konakov, S. Menozzi, Well-posedness of some non-linear stable driven SDEs, 	\emph{arXiv:1910.05945}.

%\bibitem{Fan} X. Fan, Harnack inequality and derivative formula for SDE driven by fractional Brownian motion, \emph{Sci. China Math.} 56(2013), 515-524.

\bibitem{HS}    M. Hauray, M. St\'{e}phane, On Kac's chaos and related problems, \emph{J. Funct. Anal.} 266(2014), 6055-6157.
%\bibitem{BLY} L. Bo, T. Li, X. Yu, Centralized systemic risk control in the interbank system: weak formulation and Gamma-convergence, \emph{Stochastic Process. Appl.} 150(2022), 622-654.

%\bibitem{CD} R. Carmona,  F. Delarue, Probabilistic Theory of Mean Field Games with Applications II, \emph{Mean-field Games with Common Noise and Master Equations,} Springer, 2018.
%\bibitem{CDL}  R. Carmona, F. Delarue, D. Lacker, Mean field games with common noise, \emph{Ann. Probab.} 44(2016), 3740-3803.
%\bibitem{CR} P. E. Chaudru de Raynal, Strong well-posedness of McKean-Vlasov stochastic differential equation with H\"older drift,  \emph{Stochastic Process. Appl.}  130(2020),     79--107.
%\bibitem{CF} P. E. Chaudru de Raynal, N. Frikha, Well-posedness for some non-linear diffusion processes and related pde on the wasserstein space,  \emph{arXiv:1811.06904}.
%\bibitem{DEGZ} A. Durmus,  A. Eberle, A. Guillin, R. Zimmer, An elementary approach to uniform in time propagation of chaos, \emph{Proc. Amer. Math. Soc.} 148(2020), 5387-5398.
%\bibitem{CM} D. Crisan, E. McMurray, Smoothing properties of McKean-Vlasov SDEs, \emph{Probab. Theory Relat. Fields} 171(2018), 97--148.
%\bibitem{DV} D. Dawson, J. Vaillancourt. Stochastic McKean-Vlasov equations, Nonlinear Differential
%Equations. Appl. 2(1995),199--229.

%\bibitem{FKM} N. Frikha, V. Konakov, S. Menozzi, Well-posedness of some non-linear stable driven SDEs, 	\emph{arXiv:1910.05945}.
%\bibitem{FG} N. Fournier, A. Guillin, On the rate of convergence in Wasserstein distance of the empirical measure,\emph{ Probab. Theory Related Fields} 162(2015), 707-738.
%\bibitem{GBM} A. Guillin, P. Le Bris, P. Monmarch\'{e}, Uniform in time propagation of chaos for the 2D vortex model and other singular stochastic systems, \emph{arXiv:2108.08675}.

%\bibitem{GBMEJP} A. Guillin, P. Le Bris, P. Monmarch\'{e},
%Convergence rates for the Vlasov-Fokker-Planck equation and uniform in time propagation of chaos in non convex cases, \emph{Electron. J. Probab.} 27(2022), Paper No. 124, 44 pp.
%\bibitem{HRZ} Z. Hao, M. R\"{o}ckner, X. Zhang, Strong convergence of propagation of chaos for McKean-Vlasov SDEs with singular interactions, \emph{arXiv:2204.07952.}
\bibitem{HX23e} X. Huang, Propagation of Chaos in Relative Entropy with Weak Initial Conditions, \emph{arXiv:2308.15181v7}.
%\bibitem{HRW23} X. Huang, P. Ren, F.-Y. Wang, Probability Distance Estimates Between Diffusion Processes and Applications to Singular McKean-Vlasov SDEs, \emph{J. Differential Equations}  420(2025), 376-399.
%\bibitem{HRW23} X. Huang, P. Ren, F.-Y. Wang, Probability Distance Estimates Between Diffusion Processes and Applications to Singular McKean-Vlasov SDEs, \emph{arXiv:2304.07562}.
%    \bibitem{HS} X. Huang, Y. Song, Well-posedness and regularity for distribution dependent SPDEs with singular drifts, \emph{Nonlinear Anal.} 203(2021), 112167.
%\bibitem{FH} N. Fournier, M. Hauray, Propagation of Chaos for the Landau equation with moderately soft potentials, \emph{Ann. Probab.} 44(2016), 3581-3660.

%\bibitem{FLLLT} E. Fourni\'{e}, J.-M. Lasry, J. Lebuchoux, P.-L. Lions, N. Touzi, Applications of Malliavin calculus to Monte Carlo methods in finance, \emph{Finance Stoch.} 3(1999), 391-412.



%\bibitem{GLL} O. Gu\'{e}ant O, J.-M. Lasry, P.-L. Lions, Mean Field Games and Applications, \emph{Paris-Princeton Lectures on Mathematical Finance 2010. Lecture Notes in Math.} 2003, Springer, Berlin, 205-266.
%\bibitem{GLWZ} A. Guillin, W. Liu,  L. Wu, C. Zhang, The kinetic Fokker-Planck equation with mean field interaction,     \emph{J. Math. Pures Appl.} 150(2021),1-23.

%\bibitem{HU} M. Hahn, S. Umarov, Fractional Fokker-Planck-Kolmogorov type equations and their associated stochastic differential equations, \emph{Fract. Calc. Appl. Anal.\emph}  14(2011), 56每79.
%\bibitem{HLL} W. Hong, S. Li, W. Liu, Strong convergence rates in averaging principle for slow-fast McKean-Vlasov SPDEs, \emph{J. Differential Equations}  316(2022), 94-135.
%\bibitem{HX} X. Huang, Path-distribution dependent SDEs with singular coefficients, \emph{Electron. J. Probab.} 26(2021), 1-21.
%\bibitem{HL} X. Huang, W. Lv, Exponential Ergodicity and 髦蜭換畦 for Path-Distribution Dependent Stochastic Hamiltonian System, \emph{arXiv:2109.13728}.
% \bibitem{HS} X. Huang, Y. Song, Well-posedness and regularity for distribution dependent SPDEs with singular drifts, \emph{Nonlinear Anal.} 203(2021), 112167, 18 pp.
%\bibitem{HW19} X. Huang, F.-Y. Wang, Distribution dependent SDEs with singular coefficients,  \emph{Stochastic Process. Appl.} 129(2019), 4747--4770.
%\bibitem{HY21} X. Huang, F.-F. Yang, Distribution-dependent SDEs with H\"{o}lder continuous drift and $\alpha$-stable noise, \emph{Numer. Algorithms} 86(2021), 813每831.
\bibitem{Kac}  M. Kac, Foundations of kinetic theory. In: Proceedings of the Third Berkeley Symposium on Mathematical Statistics and Probability, 1954-1955, Vol III. Berkeley: Univ of California Press, (1956), 171-197.
%\bibitem{Kac2} M. Kac, Probability and Related Topics in the Physical Sciences, Interscience, New York, 1958.
%\bibitem{K} V. N. Kolokoltsov, Nonlinear Markov processes and kinetic equations, \emph{Cambridge Tracks in Mathematics 182,} Cambridge Univ. Press, 2010.
\bibitem{JW} P.-E. Jabin,  Z. Wang, Quantitative estimates of propagation of chaos for stochastic systems with $W^{-1,\infty}$ kernels, \emph{Invent. Math.} 214 (2018), 523-591.

\bibitem{JW1} P.-E. Jabin, Z. Wang, Mean field limit and propagation of chaos for Vlasov systems with bounded forces, \emph{J. Funct. Anal.} 271(2016), 3588-3627.
\bibitem{JW2} P.-E. Jabin, Z. Wang,  Mean field limit for stochastic particle systems, \emph{Active Particles, Advances in Theory, Models, and Applications} 1(2017), 379-402.

%\bibitem{L}  D. Lacker, Hierarchies, entropy, and quantitative propagation of chaos for mean field diffusions, \emph{arXiv:2105.02983}.

%\bibitem{L} D. Lacker, On a strong form of propagation of chaos for McKean-Vlasov equations, \emph{Electron. Commun. Probab.} 23(2018), 1-11.
%\bibitem{LL} J. M. Lasry, P. L. Lions, Mean field games, \emph{Jpn. J. Math.} 2(2007), 229-260.
%\bibitem{Li} Z. Li, Measure-Valued Branching Markov Processes, \emph{Probability and its Applications (New York),} Springer, Heidelberg, 2011.
%\bibitem{LMW} M. Liang, M. B. Majka, J. Wang, Exponential ergodicity for SDEs and McKean-Vlasov processes with L\'{e}vy noise,  \emph{Ann. Inst. Henri Poincar\'e Probab. Stat.} 57(2021), 1665-1701.
%\bibitem{MV} Yu. S. Mishura, A. Yu. Veretennikov, Existence and uniqueness theorems for solutions of McKean-Vlasov stochastic equations, \emph{arXiv:1603.02212}.
%\bibitem{KT} V. N. Kolokoltsov, M. Troeva, On the mean field games with common noise and the McKean-Vlasov SPDEs, \emph{Stoch. Anal. Appl.} 37(2019), 522-549.


\bibitem{L21} D. Lacker, Hierarchies, entropy, and quantitative propagation of chaos for mean field diffusions, \emph{Probab. Math. Phys.} 4(2023), 377-432.

%\bibitem{LWZ} W. Liu, L. Wu, C. Zhang, Long-time behaviors of mean-field interacting particle systems related to McKean-Vlasov equations, \emph{Comm. Math. Phys.} 387(2021), 179-214.

%\bibitem{LGYG} Z. Liu, S. Gao, C. Yuan, Q. Guo, Stability of the numerical scheme for stochastic McKean-Vlasov equations, \emph{arXiv:2312.12699}.
\bibitem{LL} J.-M. Lasry, P.-L. Lions, Mean field games, \emph{Jpn. J. Math.} 2(2007), 229-260.
\bibitem{McKean} H. P. McKean,  A class of Markov processes associated with nonlinear parabolic equations,  \emph{Proc. Nat. Acad. Sci. U.S.A.} 56(1966), 1907-1911.
\bibitem{McKean67} H. P. McKean, Propagation of chaos for a class of non-linear parabolic equations, \emph{Stochastic Differential Equations (Lecture Series in Differential Equations, Session 7, Catholic Univ., 1967)}, pp. 41-57.
\bibitem{23R} P. Ren, Singular McKean-Vlasov SDEs: well-posedness, regularities and Wang's Harnack inequality, \emph{Stochastic Process. Appl.} 156(2023), 291-311.
\bibitem{23RW} P. Ren, F.-Y. Wang, Bi-Coupling method and applications, \emph{Probab. Theory Relat. Fields }(2025). https://doi.org/10.1007/s00440-025-01394-5.
%\bibitem{MD} L. Miclo, P. Del Moral,
%Genealogies and Increasing Propagation of Chaos For Feynman-Kac and Genetic Models,
%\emph{Ann. Appl. Probab.} 11(2001), 1166-1198.
%\bibitem{S} T. Seidman,\emph{ How violent are fast controls?}  Math. Control Signals Systems  1(1988), 89-95.
%\bibitem{RW18} P. Ren, F.-Y. Wang, Bismut formula for Lions derivative of distribution dependent SDEs and applications, \emph{J. Differential Equations} 267(2019), 4745--4777.
% \bibitem{RW20b} P. Ren, F.-Y. Wang, Exponential convergence in entropy and Wasserstein distance for McKean-Vlasov SDEs, \emph{Nonlinear Anal.} 206(2021), 112259.
%\bibitem{RZ} M. R\"ockner, X. Zhang, Well-posedness of distribution dependent SDEs with singular drifts, \emph{Bernoulli} 27(2021), 1131-1158.
%\bibitem{Song}  Y. Song, Gradient estimates and exponential ergodicity for mean-field SDEs, \emph{J. Theort. Probab.} 33(2020), 201--238.
%\bibitem{Szn} A. S. Sznitman,  Topics in propagations of chaos, \emph{In: Hennequin PL. (eds) Ecole d'Et\'{e} de Probabilit\'{e}s de Saint-Flour XIX每1989. Lecture Notes in Math,} Vol 1464. Berlin: Springer, (1991), 165每251.


%\bibitem{SW} J. Shao, D. Wei, Propagation of chaos and conditional McKean-Vlasov SDEs with regime-switching, \emph{Front. Math. China} 17(2022), 731-746.

%\bibitem{Song} Y. Song, Gradient estimates and exponential ergodicity for mean-field SDEs with jumps, \emph{J. Theoret. Probab.} 33(2020), 201-238.

%\bibitem{SY} Y. Suo, C. Yuan, Central Limit Theorem and Moderate Deviation Principle for McKean-Vlasov SDEs,  \emph{Acta. Appl. Math.} 175(2021), https://doi.org/10.1007/s10440-021-00444-z.

\bibitem{SZ} A.-S. Sznitman,   Topics in propagation of chaos, \emph{In $``$\'Ecole d'\'Et\'e de Probabilit\'es de Sain-Flour XIX-1989", Lecture Notes in Mathematics}  1464, p. 165-251, Springer, Berlin, 1991.
\bibitem{Villani} C. Villani, Topics in Optimal Transportation, \emph{Grad. Stud. Math., vol. 58, American Mathematical Society,} 2003.


\bibitem{Wbook} F.-Y. Wang, Harnack Inequality for Stochastic Partial Differential Equations, Springer, New York, 2013.
\bibitem{WR2024} F.-Y. Wang, P. Ren, Distribution Dependent Stochastic Differential Equations, \emph{World Scientific}, Singapore, 2024.
%\bibitem{W22} F.-Y. Wang, Distribution Dependent Reflecting Stochastic Differential Equations, \emph{Sci. China Math.} 66(2023), 2411-2456.
%\bibitem{WHGY} H. Wu, J. Hu, S. Gao, C. Yuan, Stabilization of Stochastic McKean--Vlasov Equations with Feedback Control Based on Discrete-Time State Observation,
%\bibitem{W21a} F.-Y. Wang, Derivative Formula for Singular McKean-Vlasov SDEs, \emph{Commun. Pure Appl. Anal.} 22(2023), 1866-1898.
%\bibitem{W16} F.-Y. Wang, Gradient estimates and applications for SDEs in Hilbert space with multiplicative noise and Dini continuous drift, \emph{J. Differential Equations} 260(2016), 2792-2829.

\bibitem{ZY}    S.-Q. Zhang,  C. Yuan, A Zvonkin's transformation for stochastic differential equations with singular drift and applications, \emph{J. Differential Equations} 297(2021), 277--319.

%\bibitem{WHGY} H. Wu, J. Hu, S. Gao, C. Yuan, Stabilization of Stochastic McKean-Vlasov Equations with Feedback Control Based on Discrete-Time State Observation, \emph{SIAM J. Control Optim.} 60(2022), 2884-2901.
%\bibitem{Zhang} S.-Q. Zhang, Existence and non-uniqueness of stationary distributions for distribution dependent SDEs, \emph{Electron. J. Probab.} 28(2023), 1-34.

%\bibitem{XXZZ} P. Xia, L. Xie, X. Zhang, G. Zhao, \emph{$L^q$($L^p$)-theory of stochastic differential equations,} Stochatic Process. Appl. 130(2020), 5188-5211.
%\bibitem{XZ} L. Xie, X. Zhang, \emph{ Ergodicity of stochastic differential equations with jumps and singular coefficients,}  Ann. Inst. Henri Poincar\'e Probab. Stat. 56(2020),  175-229.
%\bibitem{W21f} F.-Y. Wang, Distribution Dependent Reflecting Stochastic Differential Equations, \emph{arXiv:2106.12737}.
%\bibitem{YZ}  C. Yuan, S.-Q. Zhang,  A Zvonkin's transformation for stochastic differential equations with singular drift and applications, \emph{J. Differential Equations} 297(2021), 277-319.
\end{thebibliography}
\end{document}